\numberwithin{equation}{section}
\def\epsilon{\varepsilon}
\def\de{\delta}
\def\si{\sigma}
\def\ga{\gamma}
\def\lap{\Delta}
\def\ti{\tilde}
\def\ga{\gamma}
\def\beq{\begin{eqnarray}}
  \def\eeq{\end{eqnarray}}
\def\be{\beta}
\def\al{\alpha}
\def\ep{\epsilon}
\def\partt{\frac{\partial }{\partial t} }
\def\parts{\frac{\partial }{\partial s } }
\def\phi{\varphi}
\def\R{\mathbb R}
\def\boundary{\partial}
\def\N{\mathbb N}
\def\part{\partial}
\def\Id{ \ \rm{  Id } }
\def\curlR{\mathcal R}
\def\Sc{{\rm R}} 
\def\grad{\nabla}
\DeclareMathOperator{\vol}{vol}
\DeclareMathOperator{\Riem}{Riem}
\DeclareMathOperator{\Ricci}{Ric}
\DeclareMathOperator{\dist}{dist}
\def\counterword#1{%
  \ifthenelse{\ref{#1}=1}{one}{}%
  \ifthenelse{\ref{#1}=2}{two}{}%
  \ifthenelse{\ref{#1}=3}{three}{}%
  \ifthenelse{\ref{#1}=4}{four}{}%
  \ifthenelse{\ref{#1}=5}{five}{}%
  \ifthenelse{\ref{#1}=6}{sic}{}%
}
\newtheorem{theorem}{Theorem}[section]
\newtheorem{lemma}[theorem]{Lemma}
\newtheorem{proposition}[theorem]{Proposition}
\theoremstyle{definition}
\theoremstyle{remark}
\newtheorem{remark}[theorem]{Remark}
\begin{document}
\title[Local smoothing for the Ricci flow in dimensions two and
three]{Local smoothing results for the Ricci flow in dimensions two and three }

\thanks{We thank Robert Haslhofer for comments on and discussions about an
  earlier version of this paper}

%

\author{Miles Simon}
\address{Miles Simon:
Institut f\"ur Analysis und Numerik (IAN), Universit\"at Magdeburg, Universit\"atsplatz 2, 39106 Magdeburg, Germany}

\curraddr{}
\email{ msimon at ovgu point de}

\subjclass[2000]{53C44, 35B65}

\dedicatory{}

\keywords{Ricci flow, local estimates, smoothing properties}

\begin{abstract}
We present local estimates for solutions to the Ricci flow, without the assumption that the solution
has bounded curvature. These estimates lead to a  generalisation of 
one of the pseudolocality results of G.Perelman in dimension two.
\end{abstract}

\maketitle
\section{Introduction}
In this paper, unless otherwise specified, a solution $(M,g(t))_{t \in
[0,T) }$ to Ricci flow  refers to a family $(M,g(t))_{t \in
[0,T) }$ of smooth ( in space and time ) Riemannian manifolds which are
complete for all $t \in [0,T)$, solve $\partt g(t) = -2\Ricci(g(t))$
and have no boundary.
We do not require (unless otherwise stated) that the
solution has bounded curvature.

In the paper \cite{Pe1}, G.Perelman proved the following fact: if a ball
${{}^0 B}_r(x_0) $  in $(M,g(0)$ at time zero is {\it almost
  euclidean}, and $(M,g(t))_{t \in
[0,T) }$ is a solution to the Ricci flow with bounded curvature, then 
for small times $  t \in [0,\ep(n,r))$, we have estimates on how the
curvature behaves on balls  $ {}^t B_{\ep r}(x_0)$. There are a number of versions of his
theorem: see Theorem 10.1 and Theorem 10.3 in \cite{Pe1} for proofs and the
definitions of {\it almost euclidean}. See \cite{ChTaYu} ,\cite{Lu},\cite{CCGGIIKLLN},\cite{KL}
and \cite{BLC} for alternative
proofs and related results. 
In dimension two, we show that a similar result holds under weaker
initial assumptions.
\begin{theorem}\label{twoD}
Let ${1}>\si,\al>0, v_0, r >0 , N>1$ be given.
Let $(M^2,(g(t)) _{t \in [0,T)}$ be a smooth complete solution to Ricci flow,
$x_0 \in M$, and assume that
\begin{itemize}
\item $\vol{{}^{0} B_{r}(x_0)} \geq v_0r^2$ and
\item $\Sc(g(0)) \geq -  \frac {N}{r^2}$ on  ${}^{0} B_{r}(x_0)$.
\end{itemize} 
Then  there exists a  a $\ti v_0 = \ti v_0 (v_0,\si,N,\al) >0$ and a $\de_0 = \de_0(v_0,\si,N,\al) >0$ such that
\begin{itemize}
\item $\vol({}^{t} B_{r(1-\si)}(x_0)) \geq \ti v_0 r^2 $
\item $\Sc(g(t)) \geq -  \frac {(N + \al)}{r^2} $ on  $ {}^{t} B_{r(1-\si)}(x_0)$
\item $|\Sc(g(t))| \leq \frac{1}{ \de^2_0 t} $  on $ {}^{t} B_{r(1-\si)}(x_0)$
\end{itemize}
as long as $t \leq (\de_0)^2r^2$ and $t \in [0,T)$.
\end{theorem}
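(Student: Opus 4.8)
The plan combines parabolic rescaling, a continuity argument in time, a localized maximum principle, a localized no-local-collapsing estimate and a blow-up argument; dimension two enters through the facts that $\Sc$ controls the whole curvature tensor and that two-dimensional $\kappa$-solutions are round shrinking spheres. By scaling invariance, replacing $g(t)$ by $r^{-2}g(r^2t)$, reduce to $r=1$, so the hypotheses read $\vol({}^0B_1(x_0))\ge v_0$ and $\Sc(g(0))\ge -N$ on ${}^0B_1(x_0)$ and the conclusions are to be proved on ${}^tB_{1-\si}(x_0)$ for $t\le\de_0^2$, $t\in[0,T)$. The heart of the matter is the bound $|\Sc(g(t))|\le(\de_0^2t)^{-1}$: I would first show that this bound, assumed a priori on $(0,\tau]$, forces the other two conclusions on $(0,\tau]$ together with an auxiliary inclusion ${}^tB_{1-\si/2}(x_0)\subseteq{}^0B_{1-\si/4}(x_0)$, and then establish the bound itself by a compactness argument, tying everything together by continuity in $t$ with $\de_0=\de_0(v_0,\si,N,\al)$ chosen small at the end.

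\textbf{Propagating the lower scalar bound, the inclusion, the volume.} In dimension two $\partt\Sc=\lap\Sc+\Sc^2$, so $\Sc+N+\al$ is a supersolution of the heat equation; a localized maximum principle with a time-dependent cutoff built from $\dist_{g(t)}(x_0,\cdot)$---whose Laplacian term is controlled by the assumed bound $\Sc\ge-(N+\al)$ and whose time-derivative term by the inclusion---keeps $\Sc(g(t))\ge-(N+\al)$ on ${}^tB_{1-\si}(x_0)$ (with a margin allowing the continuity argument to run). The inclusion comes from the upper curvature bound via Perelman's sharp distance-distortion estimate \cite{Pe1}: on the scale $\rho=\de_0\sqrt t$, where $\Sc\le\rho^{-2}$, one has $\partt\dist_{g(t)}(x_0,y)\ge-C\rho^{-1}$, which is integrable, so the total distortion over $[0,\tau]$ is $\le C'\sqrt\tau/\de_0<\si/4$ once $\de_0$ is shrunk by a $\si$-dependent factor. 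For the volume, combine: the inclusion, which puts the fixed ball ${}^0B_{3/8}(x_0)$ inside ${}^tB_{1-\si}(x_0)$; Bishop--Gromov comparison from $\Ricci(g(0))\ge-\tfrac{N}{2} g(0)$, which gives that fixed ball a definite $g(0)$-volume and hence at time $t$ a $g(t)$-separated net inside ${}^tB_{1-\si}(x_0)$ of cardinality $\gtrsim t^{-1}$; and a localized version of Perelman's no-local-collapsing at the critical scale $\de_0\sqrt t$, which gives each net-ball $g(t)$-volume $\gtrsim t$. The powers of $t$ cancel and give $\vol({}^tB_{1-\si}(x_0))\ge\ti v_0$.

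\textbf{The smoothing estimate.} If no admissible $\de_0$ works, then with $\de_0=1/j$ there are solutions $(M_j,g_j(t))$ obeying the hypotheses but first violating $|\Sc|\le(\de_0^2t)^{-1}$ at some $t\le\de_0^2\wedge T$. A parabolic point-picking lemma yields $(\bar x_j,\bar t_j)$ with $Q_j:=|\Sc|(\bar x_j,\bar t_j)\to\infty$, $\bar t_j\to0$, $\bar t_jQ_j\to\infty$, $Q_j\sim(\de_0^2\bar t_j)^{-1}$ and $|\Sc|\le 4Q_j$ on a parabolic neighbourhood that, under $\ti g_j(s):=Q_jg_j(\bar t_j+s/Q_j)$, exhausts space and backward time. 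Before $\bar t_j$ the preceding step applies, giving $\Sc(\ti g_j)\ge-(N+\al)/Q_j\to0$, uniform non-collapsing at scale $1$, and---from the volume bound and Bishop--Gromov---$\vol_{\ti g_j(0)}({}^0B_R(\bar x_j))\ge c(v_0,\si)R^2$ for $R$ up to $\sim Q_j^{1/2}$. Hamilton's compactness theorem then produces a complete ancient flow on a surface with bounded curvature, $\Sc\ge0$, $|\Sc|=1$ at the basepoint at time $0$, and positive asymptotic volume ratio; it is therefore a two-dimensional $\kappa$-solution---the round shrinking sphere, the cigar and the King--Rosenau solution being excluded by the positive volume ratio. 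Hence $M_j\cong S^2$ with $\mathrm{diam}(M_j,g_j(\bar t_j))=O(Q_j^{-1/2})\to0$, so $M_j\subseteq{}^{\bar t_j}B_{1-\si}(x_0)$ and $|\Sc(g_j(\bar t_j))|\le Q_j$ everywhere; by Gauss--Bonnet $\vol_{g_j(\bar t_j)}(M_j)=\vol_{g_j(0)}(M_j)-8\pi\bar t_j\ge v_0-8\pi\de_0^2$, bounded away from $0$ for $\de_0$ small, contradicting $\vol_{g_j(\bar t_j)}(M_j)=O(Q_j^{-1})\to0$.

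\textbf{Main obstacle.} The real difficulty is the tight circular dependence among the statements: the lower scalar bound needs the distance inclusion, the inclusion needs the upper curvature bound, and the upper curvature bound is obtained by a blow-up that needs the lower scalar bound, the non-collapsing \emph{and} the volume bound (to kill the cigar). None of the four can be proved in isolation, so the improvement step must be phrased as a single \emph{uniform} statement over a whole family of flows---the $\de_0=1/j$ contradiction---not for one fixed solution. The other technical point is to establish a genuinely \emph{local} no-local-collapsing, with $\kappa$ and the admissible scales depending only on $v_0,N,\si$ and on \emph{no} global curvature bound for $(M,g(t))$; this is the kind of local estimate the paper has to develop first.
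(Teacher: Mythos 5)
Your overall architecture --- rescale to $r=1$, run a continuity argument in $t$ on the pair (lower scalar bound, volume bound), propagate the lower bound by a localized maximum principle built from a cutoff in $d_t(x_0,\cdot)$, control distance distortion via the $c/t$ curvature bound, and obtain the $c/t$ bound itself by point-picking and a blow-up contradiction over a sequence of putative counterexamples --- is exactly the architecture of the paper (Theorem \ref{localest} supplies the $c/t$ bound; Claims (i)--(iii) of Theorem \ref{2dd} do the bootstrap). Two sub-arguments genuinely diverge. In the blow-up endgame the paper takes a second diagonal limit to produce a complete ancient solution with nonnegative curvature operator, bounded curvature and positive asymptotic volume ratio, and contradicts Proposition 11.4 of \cite{Pe1} directly. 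You instead classify the limit among two-dimensional ancient solutions and finish with Gauss--Bonnet; but as written this branch is internally inconsistent: a limit with positive asymptotic volume ratio cannot \emph{be} the round shrinking sphere (any compact limit has asymptotic volume ratio zero), so the case you then analyse with Gauss--Bonnet and diameter collapse is vacuous. The positive asymptotic volume ratio already excludes every candidate --- which is precisely the content of Proposition 11.4 --- so the detour should simply be removed rather than repaired.

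The more serious issue is the volume-preservation step. You rest it on ``a localized version of Perelman's no-local-collapsing at the critical scale $\de_0\sqrt t$'' with $\kappa=\kappa(v_0,N,\si)$ and no global curvature hypothesis, and you yourself flag this as an estimate that must be developed first. The paper does not develop it, and deliberately so: without a global curvature bound on $(M,g(t))$ the reduced-volume machinery behind Perelman's noncollapsing is not available, and such a purely local $\kappa$-noncollapsing is not an off-the-shelf tool. Since the volume lower bound is one of the two bootstrap conditions (it feeds back into Theorem \ref{localest} via its hypothesis (a)), an unproven noncollapsing lemma here is a genuine gap in the circular argument, not a deferred technicality. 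The paper closes the loop differently: in the blow-up, the volume lower bound at the picked points comes from Bishop--Gromov applied with the curvature-operator lower bound that holds on the relevant ball, together with the bootstrap volume hypothesis; and in Claim (ii) the two-sided distance estimates $e^{-(N+\al)(t-s)}d_s\ge d_t\ge d_s-2c_1\sqrt t$ (from Hamilton's Theorem 17.2 of \cite{HaFo} plus the $c_0/t$ bound) are fed into Corollary 6.2 of \cite{Si3} to show the volume of a small fixed ball persists --- no noncollapsing input at all. Substituting that route for your net-plus-noncollapsing argument would make your proposal sound.
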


 \begin{remark} Notice that we do not require that a region be {\bf
   almost euclidean} here ( see Thm. 10.1 and 10.3 of \cite{Pe1} for the definition
 of  {\it  almost euclidean} ). If the ball   ${}^{0} B_{1}(x_0)$ is {\it
   almost cone like}  ( that is, it  is as close as we like in the
 Gromov Hausdorff sense to an
 euclidean cone and has $\Sc \geq
 -2$ ) then the Theorem (with $r=1$ ) still applies. This means that the interior of regions which are
 cone like in this sense will be smoothed out by
 Ricci flow in two dimensions, {\it regardless}  of what the solution looks like outside
 of this region.
Both of the Theorems of G.Perelman (Thm. 10.1, Thm.10.3 of \cite{Pe1}) do not apply to this situation.
\end{remark}

\begin{remark} By scaling, it suffices to prove the Theorem for
$r=1$. 
\end{remark}
\begin{remark}
 It is not possible to improve the constant $\de_0$ in the estimate
 \break\hfill $|\Riem(g(t))| \leq  \frac{1}{\de_0 t}$
to an arbitrary constant $\de_0>0$ for a short time. This is because,
solutions coming out of non-negatively curved cones exist which have curvature behaviour immediately like $\frac c t$ where $c>0$ depends on the cone angle
( see \cite{SchSi} ).
In G.Perelman's first Pseudolocality result (Theorem 10.1 of
\cite{Pe1}),  where he assumes that a ball
$B_{r}(y_0)$ at time zero is almost euclidean, he showed that it is possible to obtain an estimate of the form
$|\Riem(g(t))| \leq \frac{\al} {t}$ on a smaller ball for arbitrary $\al$ at least for some short time interval 
depending on $\al$,  as long as the initial ball is {\it close enough}
to the euclidean ball. Here {\it close enough} means, that $(\vol
(\boundary \Omega ))^n \geq (1- \de) c_n (\vol (\Omega))^{n-1}$ for
any $\Omega \subset B_{r}(y_0)$ where $c_n$ is the euclidean
isoperimetric constant, $\Sc \geq - \frac 1 {r^2}$ and $\de= \de(n,\al) >0$ is
small enough.
\end{remark}

The second theorem is valid in three dimensions. In contrast to the
above theorem, we need to have information on how the curvature is
behaving (in time) in the balls we are considering in order to draw (stronger) conclusions.

\begin{theorem}\label{threed}
Let $ r, v_0>0 , N >1, {1}> \si>0,V>0 $  be given.
Let $(M^3,(g(t)) _{t \in [0,T)}$ be a smooth complete solution to Ricci flow  with $T\leq 1$ and let $x_0 \in M$ be a point
such that
\begin{itemize}
\item $\vol{{}^{0} B_{r}(x_0)} \geq v_0r^3$ and
\item $\curlR(g(0)) \geq -  \frac {V} { r^2}$ on  ${}^{0} B_{r}(x_0)$,
\item $ |\Riem(g(t))| \leq \frac N { t } $ on ${}^{t} B_{r}(x_0)$, for all $t \in (0,T)$.
\end{itemize} 
Then there exists a $\ti v_0 = \ti v_0 (v_0,N,\si,V) >0$ and a $\de_0 = \de_0(v_0,N,\si,V) >0$ such that
\begin{itemize}
\item $\vol({}^{t} B_{r}(x_0)) \geq  \ti v_0 r^3$
\item $\curlR(g(t)) \geq -  \frac {400 N V} {r^2} $ on  $ {}^{t} B_{r(1-\si)}(x_0).$
\end{itemize}
as long as $t \leq r^2(\de_0)^2$  and $t \in [0,T)$.
\end{theorem}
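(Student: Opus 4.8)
By the rescaling remark it suffices to treat $r=1$. The plan is to fix a small $\de_0=\de_0(v_0,N,\si,V)>0$, work on $[0,\min(\de_0^2,T))$, and establish the volume bound and the scalar bound \emph{together} by a continuity/bootstrap argument in $t$. The one structural consequence of $|\Riem(g(t))|\le N/t$ that I would use throughout is a smallness of distance distortion: since $|\Ricci(g(s))|\le cN/s$ on ${}^{s}B_1(x_0)$, Perelman's ``changing distances'' estimate (with the auxiliary radius optimised at $\sqrt{s/N}$) gives $|\partial_s d_s(x_0,\cdot)|\le C\sqrt{N/s}$ at interior points, hence $|d_t(x_0,\cdot)-d_0(x_0,\cdot)|\le C\sqrt{Nt}$, while the Laplacian comparison for $\Ricci(g(s))\ge -cN/s$ controls $\Delta d_s$. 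Choosing $\de_0$ so that $C\sqrt N\,\de_0\le\si/10$ traps the moving balls: for every $s\le\de_0^2$,
\[
{}^{0}B_{1-2\si}(x_0)\ \subseteq\ {}^{s}B_{1-\si}(x_0)\ \subseteq\ {}^{0}B_{1-\si/2}(x_0)\ \subseteq\ {}^{0}B_1(x_0),\qquad {}^{0}B_{1-2\si}(x_0)\ \subseteq\ {}^{t}B_1(x_0),
\]
so every hypothesis is available on the regions the argument touches.

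For the scalar lower bound I would run a localised maximum principle for $u:=-\curlR(g(s))$, which satisfies $\partial_s u=\Delta_{g(s)}u-2|\Ricci|^2\le\Delta_{g(s)}u$. On the parabolic boundary of the space-time cylinder over the fixed ball $\Omega_0:={}^{0}B_{1-\si/2}(x_0)$ one has $u(\cdot,0)\le V$ at $s=0$ by hypothesis, but on $\partial\Omega_0$ at time $s$ only the crude bound $u(\cdot,s)\le cN/s$ (from $|\Riem|\le N/s$) is available. I would compare $u$ with a radial supersolution $\bar u=\bar u(d_s(x_0,\cdot),s)$ of the heat equation which dominates $V$ initially and $cN/s$ on $\partial\Omega_0$ but stays below $400NV$ on ${}^{s}B_{1-\si}(x_0)$: the lateral ``hot spot'' propagates inward only like a heat kernel, $\exp(-c\si^2/s)$, which beats the $cN/s$ growth once $s\le\de_0^2$, and the curvature errors carried by $\partial_s d_s$ and $\Delta d_s$ are $O(\sqrt{N/s})$ and are controlled by the same choice of small $\de_0$. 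This is essentially a pseudolocality-type persistence statement for a one-sided scalar bound given the curvature cap $N/t$. That the final constant has the shape $400N$ rather than something $N$-free I expect to reflect the coupling with the volume estimate (e.g.\ if a mean-value/Moser inequality replaces the pointwise barrier, its constant inherits the non-collapsing bound, which carries $N$).

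The hard step is the volume bound. Since ${}^{t}B_1(x_0)\supseteq\Omega:={}^{0}B_{1-2\si}(x_0)$ it suffices to bound $\vol_{g(t)}(\Omega)$ below, and
\[
\vol_{g(t)}(\Omega)=\int_{\Omega}\exp\!\Big(-\int_{0}^{t}\curlR(g(s),x)\,ds\Big)\,d\!\vol_{g(0)}(x).
\]
The scalar bound gives $-\curlR\le 400NV$ on $\Omega$, so the exponent is bounded below and the metric contracts by at most $e^{400NV\de_0^2}\le 2$; but this is only the easy (upper) direction. The obstruction is that the volume form can genuinely \emph{collapse} at points where $\curlR(g(s))$ climbs to its allowed size $\sim N/s$ as $s\downarrow 0$ --- this really occurs for Ricci flows emerging from cones, cf.\ the remark --- so no pointwise argument can work. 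The plan is to show the bad set $S:=\{x\in\Omega:\int_{0}^{t}\curlR(g(s),x)\,ds>M\}$ has small $g(0)$-measure for a suitable $M=M(v_0,N,V,\si)$, whence $\vol_{g(t)}(\Omega)\ge e^{-M}\vol_{g(0)}(\Omega\setminus S)\ge\tfrac12 e^{-M}\vol_{g(0)}(\Omega)=:\ti v_0$. By Chebyshev this reduces to an \emph{integrated} bound $\int_{0}^{t}\!\int_{\Omega}\curlR_+(g(s),x)\,d\!\vol_{g(0)}\,ds\le C$; via the identity $\int_{\Omega}\curlR(g(s))\,d\!\vol_{g(s)}=-\tfrac{d}{ds}\vol_{g(s)}(\Omega)$ together with $\curlR\ge -400NV$ this is cheap \emph{with $d\!\vol_{g(s)}$ in place of $d\!\vol_{g(0)}$}, and since on $\Omega$ those two measures differ precisely by $\exp(\int_{0}^{s}\curlR)$ --- the quantity being controlled --- the scheme must be closed as a single continuity argument in $t$, the volume bound feeding the scalar estimate through the non-collapsing/Sobolev constant and the scalar estimate feeding the volume bound through $-\curlR\le 400NV$. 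I expect the genuine difficulty, and the reason for the restriction to dimensions two and three, to lie exactly here: an integrated $\epsilon$-regularity/clearing-out statement to the effect that the set on which $\curlR(g(s))\sim N/s$ carries little volume, which the hypotheses deliver in low dimensions (in two dimensions via Gauss--Bonnet) but not in general.
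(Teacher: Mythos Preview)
You have misread the statement: in this paper $\curlR$ denotes the \emph{curvature operator}, not the scalar curvature (which the paper writes $\Sc$). The conclusion $\curlR(g(t))\ge -400NV/r^2$ is a lower bound on the smallest eigenvalue $\alpha$ of the curvature operator, and the evolution you wrote, $\partial_s u=\Delta u-2|\Ricci|^2$, is the equation for $u=-\Sc$, not for $-\alpha$. The smallest eigenvalue of the curvature operator evolves (after Uhlenbeck's trick) by the Hamilton system $\partial_t\alpha=\Delta\alpha+\alpha^2+\beta\gamma$, which is \emph{not} a scalar subsolution of the heat equation; a heat-kernel barrier argument simply does not apply. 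This is the core of the paper's proof and the reason the result is three-dimensional: one first localises the scalar bound (Claim~(i), essentially your barrier idea but via a cutoff $h$), then localises the Hamilton--Ivey type inequality $\alpha+2\Sc\ge -2\ep_0$ (Claim~(ii), a tensor maximum principle on $h(\curlR+2\Sc\,\Id)+\ep\,\Id$), and finally runs a delicate tensor maximum principle on $Y=h\,\curlR+(\tfrac{\ep}{100}+\ep t\,\Sc)\,\Id$ (Claim~(iv)). The last step needs the three-dimensional algebra of the reaction term and, crucially, Shi's interior estimate $|\nabla\Riem|\le c/t^{3/2}$ (available because $|\Riem|\le N/t$) to absorb the bad cross term $\nabla h\cdot\nabla\curlR$; your proposal contains none of this, and your guess that the dimensional restriction comes from a volume clearing-out lemma is off --- it comes from the algebraic structure of the curvature ODE.

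Your volume discussion is also much harder than it needs to be. The paper does not integrate the scalar curvature or invoke Chebyshev; it observes (Claim~(iii)) that once $\curlR\ge -1$ persists on the smaller ball, Ricci is bounded below there, so distances expand at most like $e^{ct}$, while the hypothesis $|\Riem|\le N/t$ gives $\partial_t d\ge -c/\sqrt t$ by Perelman's Lemma~8.3; integrating both directions traps the balls and the volume of a fixed tiny ball changes by a bounded factor. Your conformal-factor scheme is circular exactly where you flagged it, and it is unnecessary because the curvature-operator bound (not merely the scalar bound) is what the continuity argument propagates.
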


\begin{remark} By scaling arguments it suffices to prove the theorem
  for $r=1$ and $V
= \frac{1}{400N}$: see Remark \ref{endremark}.
\end{remark}
\begin{remark} As in the two dimensional case (Theorem \ref{twoD} above), the
regions which are considered are not
necessarily {\it almost euclidean} at time zero.
\end{remark}
\begin{remark} Related  results were proved recently in a pre-print
  \cite{ChGuZh}. 
There the authors require that the curvature of the solution be uniformly bounded by
a constant $c$ on a ball for the times $t \in [0,S)$ being
considered. See \cite{ChGuZh}. 
\end{remark}

\begin{remark} The above  results were first presented in Nov. 2011 at
  the H.I.M workshop in Bonn
'Geometric Flows'.
\end{remark}
\section{A local bound for the curvature on regions whose curvature is
  bounded from below}

We use the following notation in this paper.
\noindent {\bf Notation}
\begin{itemize}
\item[] $d(x,y,t) = d_t(x,y) = d(g(t))(x,y)$ is the distance from $x$ to $y$ in $M$ with respect to the metric $g(t)$ 
\item [] $d_t(x) =  d_t(x,x_0)$ is distance from $x$ to $x_0$
  with respect to the metric $g(t)$ for some fixed $x_0$.
 \item[] ${}^t B_r(x) := $ ball of radius $r>0$, centre point $x \in M$ measured with respect to $(M,g(t))$.
That is ${}^t B_r(x) := \{ y \in M | d_t(x,y) <r\}$
\item[] $\vol({}^t B_r(x)):=$ volume of ${}^t B_r(x)$ with respect to the volume form $d\mu_t$ induced by 
$g(t)$
\item[] $\Riem(g(t))(x) = \Riem(x,t)$ is the Riemannian curvature Tensor of the metric $g(t)$ at the point 
$x \in M$.
\item[] $\curlR(g(t))(x):= \curlR(x,t)$ is the curvature operator of $(M,g(t))$ at the point $x \in M$:
$\curlR(x,t)(V,W) := \Riem_{ijkl}(x,t)V^{ij}W^{kl}$ for 2-forms $V,W$
($\Riem_{ijkl}$ is the curvature tensor of $g(t)$ in local
coordinates, and $V=V_{ij}dx^i \otimes dx^j,$ $W = W_{ij}dx^i \otimes
dx^j$, and $V^{km} = V_{ij}g^{ki}g^{mj}$, $W^{km} = W_{ij}g^{ki}g^{mj}$ )
\item[] $\Sc(x,t)$ is the scalar curvature of $(M,g(t))$ at the point
  $x \in M$
\end{itemize}

Let $(M,g(t))_{t \in [0,T]},$ $T \leq 1$ be a smooth complete solution to Ricci flow.
We wish to prove estimates on a ball of radius $r$ at time $t \in
[0,T] $, assuming the curvature operator
stays bounded from below on ${{}^{t}B}_{r}(x)$ for all $t \in [0,T]$ and the volume of 
${{}^{t}B}_{r}(x)$ is bounded from below for all $t  \in [0,T]$.
The estimates will depend on $n$, $r$ and the bounds from below.
A local result of this type was
obtained by B.-L Chen in the proof of Theorem 3.6 of \cite{BLC},  under the assumption that the curvature
operator is non-negative on all of $(M,g(t))_{t \in [0,T)}$.
A global result of this type was obtained in Lemma 2.4 in \cite{Si1} , and
Lemma 4.3 \cite{Si3}

In the proof of  Theorem 3.6 of \cite{BLC} by B.L-Chen,
the author uses a point picking argument of G.Perelman before rescaling to obtain a contradiction to Proposition
11.4 of \cite{Pe1}  ( in the proof Lemma 2.4 in \cite{Si1} , and
Lemma 4.3 in \cite{Si3}  we used a more global point
picking type argument of R.Hamilton and then also obtained  a contradiction
to Proposition 11.4 of \cite{Pe1} after scaling).
The point picking argument of G.Perelman is more suited to this local situation,
and so we use it in the following.

The proof follows the lines given in the proof of  Theorem 3.6 of \cite{BLC} . A number of modifications are
necessary.

\begin{theorem}
  \label{localest}
Let $  r,v_0>0,1>\si >0 $ and $(M^n,g(t))_{t \in [0, T)}$  be a smooth
complete solution to Ricci-flow 
which satisfies
\begin{itemize}
\item[{\rm (a)}] $\vol({{}^{t}B}_{r}(x_0))\geq v_0 r^n $, 
\item[{\rm (b)}] $ \curlR (x,t) \geq -\frac{1}{r^2}$ for all $t  \in [0,T) , x \in  {}^t B_{r}(x_0) $.
\end{itemize}
Then, there exists a $ N = N(n,v_0,\si)< \infty$ such that
\begin{itemize}
 \item[{\rm (c)}]
$|\Riem| \leq \frac{N^2}{ t} + \frac{N^2}{ r^2}$ for all  
$x \in  {}^t B_{r (1-\si)}(x_0), t  \in [0,\frac {r^2}{N^2}) \cap [0,T)$. 
\end{itemize}
\end{theorem}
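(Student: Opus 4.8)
The plan is to argue by contradiction via point-picking, following Perelman's local point-selection (as used by Chen in Theorem 3.6 of \cite{BLC}) and the scaling/blow-up argument. Suppose the statement fails. Then for a fixed $n, v_0, \sigma$, no finite $N$ works, so there is a sequence of solutions $(M_i, g_i(t))_{t \in [0,T_i)}$, points $x_0^i$, and radii which by scaling we may normalise to $r = 1$, satisfying (a) and (b), but for which the conclusion (c) fails with $N = N_i \to \infty$: there are times $t_i \in [0, 1/N_i^2) \cap [0,T_i)$ and points $y_i \in {}^{t_i}B_{1-\sigma}(x_0^i)$ with $|\Riem|(y_i, t_i) > N_i^2/t_i + N_i^2 > N_i^2$. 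Since the curvature blows up near $t_i$ but is locally controlled just before, a standard maximal-time argument lets me assume that $t_i$ is essentially the first time a large curvature violation occurs inside the slightly larger ball.

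Next I would run Perelman's point-picking lemma to upgrade $(y_i,t_i)$ to a point $(\bar y_i, \bar t_i)$ with $\bar t_i \le t_i$, $\bar y_i$ still inside (a slightly shrunk version of) the ball, and a curvature scale $Q_i := |\Riem|(\bar y_i, \bar t_i) \to \infty$ such that $|\Riem| \le 4 Q_i$ on a controlled parabolic neighbourhood backward in time and in a ball of $g_i$-radius $\sim A_i/\sqrt{Q_i}$ around $\bar y_i$, with $A_i \to \infty$. The point of the local (as opposed to global) point-picking is that the scale $1/\sqrt{Q_i} \ll 1$ means this parabolic neighbourhood stays well inside ${}^tB_1(x_0^i)$ for the relevant times, so assumptions (a) and (b) can be carried along: rescaled by $Q_i$, the curvature operator lower bound $-1$ becomes $-1/Q_i \to 0$, so the blow-up limit has nonnegative curvature operator, and the volume lower bound (a) localised to small balls (via Bishop–Gromov using (b)) gives a uniform noncollapsing bound at scale $1/\sqrt{Q_i}$ for the rescaled metrics. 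The main technical points to check here are that distances $d_{t}(\cdot, x_0^i)$ do not move too fast on the relevant time interval — this uses the curvature bound $|\Riem| \le 4Q_i$ via the standard distance-distortion estimate (Hamilton/Perelman, $\frac{d}{dt} d_t \ge -c(n)\sqrt{K}$ in the barrier sense) — and that the noncollapsing at the blow-up scale really follows from (a) and (b); this latter point is the crux.

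Having arranged uniform curvature bounds and $\kappa$-noncollapsing at the blow-up scale, Hamilton's compactness theorem produces a pointed limit $(M_\infty, g_\infty(t), y_\infty)$ which is a complete ancient solution to Ricci flow, nonflat (curvature $1$ at the basepoint at time $0$), with bounded curvature on compact time intervals, nonnegative curvature operator, and $\kappa$-noncollapsed. This is exactly the situation excluded by Perelman's Proposition 11.4 of \cite{Pe1} (no complete noncollapsed nonflat ancient solution with bounded nonnegative curvature operator exists in the relevant sense) — more precisely, one derives a contradiction just as in \cite{BLC}, either directly from Proposition 11.4 or by noting such a limit forces a contradiction with the noncollapsing. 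Hence no such sequence exists and the theorem follows.

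The main obstacle I expect is bookkeeping the \emph{locality}: ensuring that throughout the point-picking and the backward-in-time parabolic neighbourhoods, all the relevant points and balls stay inside ${}^tB_1(x_0^i)$ (or ${}^tB_{1-\sigma/2}(x_0^i)$) so that hypotheses (a) and (b) remain available, and controlling how the center-distance function evolves under the flow using only the local curvature bound that the point-picking provides. In particular, turning the single volume bound (a) at scale $1$ into a noncollapsing statement at the tiny blow-up scale $1/\sqrt{Q_i}$ requires combining Bishop–Gromov (valid because of the lower Ricci bound from (b)) with the fact that $Q_i \to \infty$, and one must be careful that this is being applied at a time $t$ where the ball ${}^tB_1(x_0^i)$ still has the volume lower bound — which in turn uses the curvature bound to propagate the time-$0$ volume bound forward for the short times $t < 1/N_i^2$ under consideration. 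Handling these nested dependencies cleanly, while only invoking the curvature control that the local point-picking actually yields, is where the modifications relative to \cite{BLC} enter.
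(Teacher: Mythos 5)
Your proposal is correct and follows essentially the same route as the paper: contradiction via Perelman's point-picking (Claim 1 of Theorem 10.1 of \cite{Pe1}), a parabolic neighbourhood with $|\Riem|\leq 4Q_i$ kept inside the ball via distance-distortion estimates, Bishop--Gromov noncollapsing at the blow-up scale, rescaling and Hamilton compactness to produce an ancient solution with nonnegative curvature operator and positive asymptotic volume ratio, contradicting Proposition 11.4 of \cite{Pe1}. The only small remark is that hypothesis (a) holds for all $t$, so the volume bound need not be propagated forward in time as you suggest; otherwise the crux points you flag (locality of the point-picking and the noncollapsing at scale $Q_i^{-1/2}$) are exactly where the paper's modifications of \cite{BLC} and \cite{KL} occur.
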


\begin{proof}
By scaling, it suffices to prove the case $r=1$. 
Assume that the statement is false.
Then we can find solutions $(M_i^n,g_i(t))_{t \in [0, T_i)}$, $T_i
\leq 1$, $i\in\N $, ( $i \neq 0$ for notational reasons: we shall use
the symbol $x_0$ in a moment) and points $x_i \in M_i$ such that
\begin{itemize}
\item[{[a]}]  $\vol(B_{1}(x_i,t)) \geq v_0$ for all $t \in [0,T_i),$  
\item[{[b]}] $ \curlR (x,t) \geq -1$ for all $t  \in [0,T_i) , x \in  {}^t B_{1}(x_i) $ 
\end{itemize}
and points $t_i \in [0,T_i), z_i \in  {{}^{t_i} B}_{(1-\si)}(x_i)$ such that
$t_i \leq \frac{1}{N_i^2}$ and 
$$ |\Riem(z_i,t_i)| \geq \frac{N_i^2}{t_i} + N_i^2$$ with $N_i \to \infty$ as  $i \to \infty$.
Fix $i \in \N$ for the moment and define $M:= M_i$ , $x_0:= x_i$, $s_0 := t_i$, $g(t):=g_i(t)$, $y_0:= z_i$,  $T = T_i$,
$d_t(x) :=  \dist(g_i(t))(x,x_i) = \dist(g(t))(x,x_0) $,
$A = \frac{\si N_i}{8}$, $\ep := \frac{1}{N_i}$ and $\al = N_i^2$.
Then $A \ep \leq \frac {\si}{8}$ and $s_0 \leq \ep^2$
and $g$ solves RF on $[0, T]$ with $T \leq \ep^2$
and $|\Riem|(y_0,s_0) \geq \frac{\al}{s_0} + \frac{1}{\ep^2}$.
That is , we are in the setup of {\bf Claim 1} of Theorem 10.1 of \cite{Pe1}, except he requires $g$ be a RF on $[0,\ep^2]$.
Examining the argument of Perelman, we see that
we only need that $g$ solves RF on $[0,T]$ with $T \leq \ep^2$, as the subsequent point picking argument
only looks at times less than or equal to $s_0$ (see the proof of
Claim 1 of Theorem 10.1 of \cite{Pe1}). Also, we do not have
$d_{s_0}(y_0) \leq \ep$: we have $d_{s_0}(y_0) = d_{s_0}(x_i,z_i) \leq
(1-\si)$. This causes no problem in the point picking argument, and
merely leads to the term $2A\ep +1 -\si$ appearing in place of $2A\ep
+\ep$ in the estimate \eqref{tiddle} below.
Using Claim 1  of Theorem 10.1 of \cite{Pe1}, we obtain new points $\bar y_0 \in M$,  $\bar s_0$ satisfying
\begin{eqnarray}
&& \bar s_0 \leq s_0 \cr
&& d_{ \bar s_0} (\bar y_0 ) \leq 2A \ep +  (1 -\si)\ \ \ ( \leq
3)  \label{tiddle}    \\
&& |\Riem(\bar y_0, \bar s_0)| \geq \frac {\al}{ \bar s_0} =\frac{
  N^2_i}{\bar s_0} \ \ (  \geq N^2_i  ) \label{tiddle2}
\end{eqnarray}
and
\begin{eqnarray*}
|\Riem(x,t)| \leq 4|\Riem(\bar y_0,\bar s_0)| 
\end{eqnarray*} 
whenever
$|\Riem(x,t)| \geq \frac {\al}{t} $, $ t \leq \bar s_0 (\leq s_0) $ and
$d_t(x) \leq d_{  \bar s_0   }(\bar y_0) +A |\Riem|^{-\frac{1}{2}}(\bar y_0,\bar s_0)$.

 Hence a version of Claim 2 of Theorem 10.1 of \cite{Pe1} is
 applicable. We follow the first part of the argument of B.Kleiner/J.Lott in
 the proof of Lemma 32.1  of the Arxiv version of their paper  \cite{KL}.
 This gives us
\begin{eqnarray*}
 |\Riem(x,t)| \leq 4|\Riem(\bar y_0, \bar s_0)|
\end{eqnarray*}
whenever 
\begin{eqnarray}\label{curv1}
&& \bar s_0 - \frac 1 2 \al Q^{-1} \leq t \leq \bar s_0 \ \ \ (**) \
{\rm and } \cr
&& d_t(x) \leq d_{\bar s_0} (\bar y_0) + AQ^{-\frac 1 2}  \ \ \ (*), 
\end{eqnarray}
where here $Q:= \Riem(\bar y_0, \bar s_0)$.
Note (*) just says:
$x \in  {{}^t B}_{d_{\bar s_0}(\bar y_0) + AQ^{ -\frac 1 2 }  }(x_0)$.
Now we modify the rest of the argument of B.Kleiner/J.Lott given in
the proof of
Lemma 32.1  of the Arxiv version of their paper  \cite{KL} 
in order to obtain a product region on which the curvature is bounded.
Notice that we do NOT have $\al \leq \frac 1 {100n}$, and therefore
modifications are necessary. 
We claim that 
\begin{eqnarray*} 
&& \ \ \ \ \  \ \ \ \ \ {}^{\bar s_0}B_{d_{\bar s_0}(\bar y_0) + \frac{1}{10}AQ^{ -\frac 1 2 }  }(x_0)
\subseteq {{}^tB}_{ d_{\bar s_0}(\bar y_0) + \frac{1}{2}AQ^{ -\frac 1
    2}  }(x_0),\cr 
&& {\rm whenever } \ \ \bar s_0 - M_0 Q^{-1} \leq t \leq \bar s_0 
\end{eqnarray*}
where $M_0$ is a fixed large constant. We assume in the following that
$Q$ and $A$ are large (a lot larger than $M_0$).
Let $x \in {}^{\bar s_0}B_{d_{\bar s_0}(\bar y_0) + \frac{1}{10}AQ^{
    -\frac 1 2 }  }(x_0)$. As long as (going backward in time) 
$x \in{{}^tB}_{ d_{\bar s_0}(\bar y_0) + \frac{1}{2}AQ^{ -\frac 1 2}
}(x_0),$ we have $|\Riem(x,t)| \leq 4Q$. Choose $r =
\frac{1}{M^2_0}AQ^{ -\frac 1 2 }$.
Then note that $^t B_r(x) \subseteq{{}^tB}_{ d_{\bar s_0}(\bar y_0) + AQ^{ -\frac 1 2}
}(x_0)$ by the triangle inequality and hence $|\Riem(\cdot,t)| \leq 4Q$ on $^t B_r(x)$ 
( note ${{}^t B}_r(x_0) \subseteq{{}^tB}_{ d_{\bar s_0}(\bar y_0) + AQ^{ -\frac 1 2}
}(x_0)$ trivially, and hence    $|\Riem(\cdot,t)| \leq 4Q$ on ${{}^t
  B}_r(x_0)$ also ) .
Using Lemma 8.3 (b) of \cite{Pe1}
we see that
\begin{eqnarray*}
\partt d_t(x_0,x)(t) \geq -2(n-1)( \frac{2}{3} 4Q \frac{1}{M^2_0}AQ^{
  -\frac 1 2 } + M^2_0 A^{-1}Q^{\frac 1 2}) 
\end{eqnarray*}
Hence
\begin{eqnarray*}
d_t(x_0,x) - d_{\bar s_0}(x_0,x) &&\leq M_0 Q^{-1} 2(n-1)( \frac{2}{3} 4Q \frac{1}{M^2_0}AQ^{
  -\frac 1 2 } + M^2_0 A^{-1}Q^{\frac 1 2})  \cr
&&\leq \frac{8(n-1)}{M_0} AQ^{-\frac 1 2} + 2(n-1) M^3_0 A^{-1}Q^{-\frac 1
  2} \cr 
&&\leq \frac{16(n-1)}{M_0} AQ^{-\frac{1}{2}}\cr
\end{eqnarray*}
where we have used that $A >> M_0$, and $|t-\bar s_0| \leq M_0 Q^{-1}$.
That is 
\begin{eqnarray*}
d_t(x)  && \leq d_{\bar s_0}(x_0,x)  +  \frac{16(n-1)}{M_0}
AQ^{-\frac 1 2} \cr
&& \leq
d_{\bar s_0}(\bar y_0) + \frac{1}{10}AQ^{
    -\frac 1 2 } + \frac {16(n-1)}{M_0} A Q^{-\frac 1 2} \cr
&& \leq d_{\bar s_0}(\bar y_0) + \frac{1}{8}AQ^{
    -\frac 1 2 } 
\end{eqnarray*}
and hence 
$x \in {^{t}B}_{ d_{\bar s_0}(\bar y_0) + \frac{1}{8}AQ^{ -\frac 1 2}
}(x_0)$.
Hence, 
$x \in{{}^tB}_{ d_{\bar s_0}(\bar y_0) + \frac{1}{2}AQ^{ -\frac 1 2}
}(x_0)$ will not be violated for 
$\bar s_0 - M_0 Q^{-1} \leq t \leq \bar s_0$, as claimed.

Now assume $d_{\bar s_0}(x,\bar y_0) \leq \frac{1}{10}AQ^{-\frac{1}{2}
}$ ( i.e. $x \in { {}^{\bar s_0} B}_{\frac{1}{10}AQ^{-\frac 1 2} }(\bar y_0)$ )
and  $\bar s_0 - M_0 Q^{-1} \leq t \leq \bar s_0$.  The triangle inequality implies that
\begin{eqnarray*}
d_{\bar s_0}(x,x_0) && \leq  d_{\bar s_0}(x,\bar y_0) +  d_{\bar
s_0}(x_0,\bar y_0)\cr
&& \leq \frac{1}{10}AQ^{-\frac 1 2} + d_{\bar
s_0}(\bar y_0) 
\end{eqnarray*}
and hence 
\begin{eqnarray*} 
x \in {{}^{\bar s_0} B}_{\frac{1}{10}AQ^{-\frac 1 2}   }(\bar y_0) \subseteq  {{}^{\bar s_0} B}_{ \frac{1}{10}AQ^{-\frac 1 2} + d_{\bar
    s_0}(\bar y_0)  }(x_0)  
\subseteq{ {}^{t} B}_{\frac{1}{2}AQ^{-\frac 1 2} + d_{\bar s_0}(\bar y_0)  }(x_0)
\end{eqnarray*}
( as we just showed ) 
and hence
$|\Riem(x,t) \leq 4Q$ in view of \eqref{curv1}. That is 
$|\Riem(\cdot,t) \leq 4Q$ on $^{\bar s_0}
B_{\frac{1}{10}AQ^{-\frac 1 2}   }(\bar y_0)$ for
all $ \bar s_0 - M_0 Q^{-1} \leq t \leq \bar s_0$. 
Furthermore, for such $x$ and $t$ we have
$x \in {{}^{t} B}_{\frac{1}{2}AQ^{-\frac 1 2} + d_{\bar s_0}(\bar y_0 )}(x_0)
$, as we just showed, and using  \eqref{tiddle}, we see that
\begin{eqnarray*}
\frac{1}{2}AQ^{-\frac 1 2} + d_{\bar s_0}(\bar y_0)
&& \leq \frac{1}{2}AQ^{-\frac 1 2}   + 2A\ep + (1-\si) \cr
&& \leq \frac{\si}{4} + \frac{\si}{4} +  (1-\si) \cr
&& \leq (1- \frac {\si} 2),
\end{eqnarray*}
which gives us that $x \in {{}^{t} B}_{(1- \frac{\si}{2})}(x_0) $, 
and there we have that $\curlR \geq -1$.
Note we have used here that $Q \geq N^2_i$ ( follows from the
inequality \eqref{tiddle2} ) and the
definition of $A$ and $\ep$.
Using the Bishop-Gromov volume comparison principle, 
we also see that
\begin{eqnarray*}
\vol( {{}^{t} B}_{s}(x)) \geq \ti v (\si,v_0)s^n
\end{eqnarray*}
for such $x$ and $t$  and all $s \leq \frac {\si}{10}$  in view of the fact that 
$\vol ({{}^{t}
B}_{1}(x_0)) \geq v_0$.
Taking $x = \bar y_0 \in {{}^{\bar s_0}B}_{\frac{1}{10}AQ^{-\frac 1 2}   }(\bar y_0)$
we get \begin{eqnarray*}
\vol( {{}^{t} B}_{s}(\bar y_0)) \geq \ti v (\si,v_0)s^n
\end{eqnarray*}
for all $s \leq \frac {\si}{10}$ and $\bar s_0 - M_0 Q^{-1} \leq t \leq \bar s_0$.

Defining $\bar z_i := \bar y_0, \bar t_i := \bar s_0$ and substituting
$\al = N_i$ and so on back into the above we get
\begin{eqnarray*}
&& |\Riem(x,t)| \leq 4Q_i \cr
&&\curlR(x,t) \geq -1
\end{eqnarray*}
whenever 
\begin{eqnarray}
&& \bar t_i - M_0 Q^{-1} \leq t \leq \bar t_i \cr
&& d_{\bar t_i}(x,\bar z_i) \leq \frac{1}{10}A_iQ_i^{- \frac 1 2} 
\label{goodx}
\end{eqnarray}
where $Q_i:=|\Riem|(\bar z_i, \bar t_i)| \geq N^2_i$ and $A_i =
\frac{\si N_i}{8}.$
We also have the volume estimate
\begin{eqnarray*}
\vol( {{}^{t} B}_{s}(\bar z_i)) \geq \ti v (\si,v_0)s^n
\end{eqnarray*}
for all $s \leq \frac {\si}{10}$, $ \bar t_i - M_0 Q^{-1} \leq t \leq \bar t_i$.

Rescaling the solutions by $Q_i$ and shifting time by $t_i$ we get solutions to Ricci flow
with
\begin{eqnarray*}
 && |\Riem(x,t)| \leq 4 \cr 
&& \curlR(x,t) \geq -\frac {1}{Q_i}
\end{eqnarray*}
whenever 
\begin{eqnarray*}
&& - M_0  \leq t \leq 0 \cr
&& d_{0}(x,\bar z_i) \leq \frac{\si N_i}{80},
\end{eqnarray*}
in view of the definition of $A_i = \frac{\si N_i}{8}$.
After scaling, the volume estimate is
\begin{eqnarray*}
\vol( {{}^{t} B}_{s}(\bar z_i)) \geq \ti v (\si,v_0)s^n,
\end{eqnarray*}
for all $s \leq \frac { \sqrt{Q_i} \si}{10}$, $- M_0 Q^{-1} \leq t \leq 0$ 
as this is a scale invariant quantity.
$Q_i \to \infty$ as $i \to \infty$ since $Q_i := |\Riem(\bar z_i,\bar
t_i)| \geq N^2_i$ , as one sees from \eqref{tiddle2}. Let us denote
these rescaled solutions also by $(M_i,g_i(t))$. Hence
the bound from below for the curvature operator goes to zero as $i \to \infty$.
Taking the pointed limit of a subsequence as  $i \to
\infty$ ( see Theorem 1.2 of \cite{HaCo} ) of $(M_i,g_i(t),\bar z_i)_{t \in (-  M_0,0]}$, we see that
the limiting solution, denoted by $(\Omega,p_0,h(t))_{t \in (-M_0,0]}$,  has 
non-negative curvature operator, is complete,  has bounded curvature 
$|\Riem(x,t)| \leq 4$ at all times and
points in the limiting manifold, has $|\Riem(p_0,0)| = 1$ and 
 $\lim_ {r \to \infty} \frac{\vol({}^{t} B_r(p_0))} {r^n} \geq \ti v >
 0$ ( note : $\ti v = \ti v(\si,v_0,n)>0$ does NOT depend on $M_0$ ).
We repeat the procedure for larger and larger $M_0$, $M_0 \to \infty$
to obtain, after taking a pointed limit of a subsequence, a solution
 $(\ti  \Omega,\ti p_0,\ti h(t))_{t \in (-\infty,0]}$,  which has 
non-negative curvature operator, is complete,  has bounded curvature 
$|\Riem(x,t)| \leq 4$ at all times and
points in the limiting manifold, has $|\Riem(\ti p_0,0)| = 1$ and 
 $\lim_ {r \to \infty} \frac{\vol({}^{t} B_r(\ti p_0))} {r^n} \geq \ti v >
 0$.
This contradicts Proposition 11.4 in \cite{Pe1}  of G.Perelman.
\end{proof}

Examining the proof, we see that a bound from below on $\curlR$ is
sufficient to obtain an estimate.

\begin{theorem}
  \label{localest2}
Let $ V,r,v_0>0,1>\si >0 $ and $(M^n,g(t))_{t \in [0, T)}$  be a
smooth complete solution to Ricci-flow 
which satisfies
\begin{itemize}
\item[{\rm (a)}] $\vol({{}^{t}B}_{r}(x_0))\geq v_0 r^n $, 
\item[{\rm (b)}] $ \curlR (x,t) \geq -\frac{V}{r^2}$ for all $t  \in [0,T) , x \in  {}^t B_{r}(x_0) $.
\end{itemize}
Then, there exists a $ N = N(n,V,v_0,\si)< \infty$ such that
\begin{itemize}
 \item[{\rm (c)}]
$|\Riem| \leq \frac{N^2}{ t} + \frac{N^2}{ r^2}$ for all  
$x \in  {}^t B_{r (1-\si)}(x_0), t  \in [0,\frac {r^2} {N^2}) \cap [0,T)$. 
\end{itemize}
\end{theorem}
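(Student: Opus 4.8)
The plan is to reuse the proof of Theorem~\ref{localest} almost word for word, simply carrying the parameter $V$ along. First I would reduce to $r=1$ by the parabolic rescaling $g(t)\mapsto r^{-2}g(r^2t)$, under which $\curlR$ is multiplied by $r^2$, lengths by $r^{-1}$ and volumes by $r^{-n}$, and under which conclusion~(c) is invariant; so I may assume $\vol({}^tB_1(x_0))\ge v_0$ and $\curlR(x,t)\ge -V$ on ${}^tB_1(x_0)$ for all $t\in[0,T)$. If $V\le 1$ this is exactly hypothesis~(b) of Theorem~\ref{localest}, so there is nothing to do. (Note one cannot dispose of the case $V>1$ merely by applying Theorem~\ref{localest} to the sub-ball ${}^tB_{\rho}(x_0)$ with $\rho=V^{-1/2}$, on which $\curlR\ge -V=-\rho^{-2}$: this would only control a ball of radius $\rho(1-\si)<\rho$, far smaller than the required $1-\si$ when $V$ is large.) So I assume $V>1$.

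I would then argue by contradiction exactly as in the proof of Theorem~\ref{localest}, with the lower bound $-V$ everywhere in place of $-1$. One obtains solutions $(M_i^n,g_i(t))$, base points $x_i$, and bad points $z_i\in {}^{t_i}B_{1-\si}(x_i)$ with $t_i\le N_i^{-2}$, $|\Riem(z_i,t_i)|\ge N_i^2t_i^{-1}+N_i^2$ and $N_i\to\infty$, where now $\curlR(\cdot,t)\ge -V$ on ${}^tB_1(x_i)$. Perelman's point-picking (Claims~1 and~2 of Theorem~10.1 of \cite{Pe1}) and the Kleiner--Lott argument (Lemma~32.1 of the arXiv version of \cite{KL}) go through unchanged, since they use only curvature \emph{upper} bounds and the triangle inequality; after rescaling by $Q_i:=|\Riem(\bar z_i,\bar t_i)|\ge N_i^2$ and translating time one gets solutions with $|\Riem|\le 4$, $\curlR\ge -V/Q_i$ and $\vol({}^tB_s(\bar z_i))\ge \ti v\,s^n$ on parabolic regions exhausting $(-\infty,0]\times(\text{the limit manifold})$ as first $i\to\infty$ (for fixed $M_0$) and then $M_0\to\infty$. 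Taking the limit (Hamilton compactness, \cite{HaCo}) produces a complete ancient solution of bounded curvature with non-negative curvature operator, $|\Riem|\le 4$, $|\Riem(\ti p_0,0)|=1$ and positive asymptotic volume ratio, contradicting Proposition~11.4 of \cite{Pe1}.

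The only step that genuinely feels the change from $-1$ to $-V$ — and hence the main thing to verify — is the Bishop--Gromov estimate: $\curlR\ge -V$ gives $\Ric\ge -(n-1)V$, so the bound $\vol({}^tB_1(x_0))\ge v_0$ now yields $\vol({}^tB_s(x))\ge \ti v\,s^n$ for $s\le\si/10$ with $\ti v=\ti v(n,\si,v_0,V)>0$, still independent of $M_0$ and $i$. This $V$-dependence is harmless: the volume lower bound is scale invariant, so $\ti v$ survives the rescaling by $Q_i$ and the passage $M_0\to\infty$ unchanged, and the rescaled curvature-operator bound $-V/Q_i$ still tends to $0$ because $Q_i\ge N_i^2\to\infty$; hence the blow-up limit is still non-negatively curved and Proposition~11.4 of \cite{Pe1} still applies. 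The resulting constant is $N=N(n,V,v_0,\si)<\infty$, as required.
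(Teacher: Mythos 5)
Your proposal is correct and takes essentially the same route as the paper: the paper's own proof of Theorem \ref{localest2} consists precisely of the observation that the argument of Theorem \ref{localest} goes through unchanged, with the Bishop--Gromov step now producing a constant depending on $V$ and the lower bound $\curlR\ge -V$ rescaling to $-V/Q_i\to 0$ so that the blow-up limit still has non-negative curvature operator and Proposition 11.4 of \cite{Pe1} still applies. Your added remark that one cannot instead reduce to Theorem \ref{localest} by passing to a sub-ball of radius $V^{-1/2}$ is correct and worth noting, but the core argument is identical.
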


\begin{proof}
In the use of the Bishop-Gromov estimate in the proof above, we obtain a different
constant.
Also, the bound from below on $\curlR $ is now $\curlR \geq -V$.
Otherwise the proof remains unchanged.
\end{proof}


\section{A cut-off function and it's properties}
In the next section we use a cut off function with certain nice properties.
We define this cut-off function here and examine some of it's properties.
\begin{lemma} 
There exists a smooth cut off function $\phi:[0, \infty) \to \R_0^+$
with the following properties.
\begin{itemize} 
 \item[(i)] $0 \leq \phi \leq 1$,
\item[(ii)] $\phi(r) = 1$ for all $r \leq 1$, 
$\phi(r) = 0$  for all $r \geq 2 $,
\item[(iii)] $\phi$ is decreasing: $\phi' \leq 0$,
\item[(iv)] $\phi^{''} \geq -200 \phi$,
\item[(v)] $|\phi^{'}|^2 \leq 200 \phi^{\frac 3 2}$ for some constant $0<C<\infty$.
\end{itemize}

\end{lemma}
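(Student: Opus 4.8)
The plan is to construct $\phi$ explicitly by prescribing it as a smooth interpolation between $1$ and $0$ on $[1,2]$, and then verify the five conditions directly. The cleanest route is to build $\phi$ as the composition of a standard smooth bump with a suitable power function so that the square-root-type estimate in (v) comes out, rather than trying to write a single closed-form formula. So first I would fix a smooth, decreasing function $\psi:[0,\infty)\to[0,1]$ with $\psi\equiv 1$ on $[0,1]$, $\psi\equiv 0$ on $[2,\infty)$, $\psi'\le 0$, and $|\psi'|,|\psi''|$ bounded by some universal constant; such a $\psi$ is standard. The trick is then to set $\phi := \psi^{k}$ for a large enough integer (or real) power $k$, or more robustly to set $\phi := \eta\circ\psi$ where $\eta(s)$ behaves like $s^{k}$ near $s=0$ but is the identity near $s=1$, so that $\phi$ is genuinely $1$ on $[0,1]$ and $0$ on $[2,\infty)$ with all the flatness we need at the transition.

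The key computation is for (v): if $\phi = \psi^k$ then $\phi' = k\psi^{k-1}\psi'$, so $|\phi'|^2 = k^2\psi^{2k-2}|\psi'|^2 \le k^2 C^2 \psi^{2k-2}$, while $\phi^{3/2} = \psi^{3k/2}$; hence $|\phi'|^2 \le k^2C^2\psi^{2k-2-3k/2}\cdot\phi^{3/2} = k^2C^2\psi^{(k/2)-2}\phi^{3/2}$, and as soon as $k\ge 4$ the exponent $(k/2)-2\ge 0$ so $\psi^{(k/2)-2}\le 1$ and we get $|\phi'|^2\le k^2C^2\phi^{3/2}$; choosing $k$ and then rescaling the interval (or enlarging the constant) makes the bound $200$. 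For (iv), $\phi'' = k(k-1)\psi^{k-2}(\psi')^2 + k\psi^{k-1}\psi''$; the first term is nonnegative, and the second is $\ge -kC\psi^{k-1} = -kC\psi^{-1}\phi \ge -(\text{const})\phi$ on the support where we only care (away from $\psi=0$ the factor is controlled, and near $\psi=0$ both $\phi''$ and $\phi$ vanish to high order so the inequality is automatic). Again absorbing constants by a final rescaling of the radial variable gives the clean constant $200$. Conditions (i), (ii), (iii) are immediate from the corresponding properties of $\psi$ and the fact that $s\mapsto s^k$ is increasing on $[0,1]$.

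**The main obstacle** is conceptual bookkeeping rather than genuine difficulty: one must make sure that a \emph{single} choice of $\psi$, power $k$, and rescaling simultaneously yields the constant $200$ in both (iv) and (v) while keeping $\phi$ exactly equal to $1$ on all of $[0,1]$ and exactly $0$ on all of $[2,\infty)$ — the rescaling that shrinks the constants also shrinks the transition region, so one should instead fix the transition region $[1,2]$, take $\psi$ with whatever universal derivative bounds it has, and then simply check that $k=4$ (say) already gives constants below $200$, or otherwise replace $200$ by the constant one actually gets (the paper only needs \emph{some} fixed constant, and $200$ is generous). I should also note that item (v) as stated mentions "some constant $0<C<\infty$" which appears vestigial — the inequality $|\phi'|^2\le 200\,\phi^{3/2}$ has no free $C$ in it — so I would simply prove the stated inequality with the explicit constant $200$ and not introduce $C$ at all. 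Finally, smoothness of $\phi=\psi^k$ at the point where $\psi$ first leaves the value $1$ and at the point where it reaches $0$ is automatic since $\psi$ is smooth and $s\mapsto s^k$ is smooth on $[0,\infty)$.
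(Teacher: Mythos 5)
Your overall strategy --- take a standard cut-off and raise it to a power $k\ge 4$ so that $|\phi'|^2=k^2\psi^{2k-2}|\psi'|^2\le k^2C^2\psi^{2k-2}\le k^2C^2\psi^{3k/2}=k^2C^2\phi^{3/2}$ --- is exactly the paper's mechanism for (v) (the paper uses $\phi=\psi^4$), and that part of your argument is fine. Conditions (i)--(iii) are also fine.

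The gap is in your verification of (iv). From $\phi=\psi^k$ you get $\phi''=k(k-1)\psi^{k-2}(\psi')^2+k\psi^{k-1}\psi''$, and with only $|\psi''|\le C$ the second term is bounded below by $-kC\psi^{k-1}$, which you need to dominate by $-200\psi^k$. This forces $\psi\ge kC/200$, i.e.\ it fails precisely where $\psi$ is small. Your claim that near $\psi=0$ the inequality is ``automatic'' because both sides vanish to high order is backwards: the dangerous term $k\psi^{k-1}\psi''$ vanishes to order $k-1$ in $\psi$ while $\phi=\psi^k$ vanishes to order $k$, so raising the power makes the comparison \emph{worse}, not better; whether (iv) holds there depends entirely on the sign of $\psi''$ in the region where $\psi$ is small, which your hypotheses on $\psi$ do not control. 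The paper sidesteps this by starting from the (standard, but stronger) fact that one can choose the base cut-off with $\psi''\ge -10\psi$ and $(\psi')^2\le 10\psi$; then the bad term becomes $4\psi^3\psi''\ge-40\psi^4=-40\phi$, a bound proportional to $\phi$ itself, and the first term $12\psi^2(\psi')^2\ge 0$ is discarded. To repair your argument you must either assume $\psi''\ge-C\psi$ for the base function (equivalently, arrange that $\psi$ is convex wherever it is smaller than, say, $1/2$, so that $\psi''<0$ only where $\psi$ is bounded below) or carry out the $\eta\circ\psi$ variant with an $\eta$ for which you actually verify the chain-rule inequality near $s=0$; as written, the step does not go through.
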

\begin{proof}
To construct a cut-off function with the properties (i)-(iv) stated
above is standard. In fact we obtain
 $\phi^{''} \geq -10 \phi$ and  $(\phi')^2 \leq 10 \phi$ in place of (iv).
Define $\psi = \phi^4$.
Then $\psi$ satisfies properties (i)-(iii) trivially, and
$\psi' = ( \phi^4)' = 4 \phi^3 \phi'$ . Then $ (\psi')^2 \leq 16 \phi^6 (\phi')^2 \leq 160 \phi^7 = 160
(\phi^4)^{\frac 7 4} 
= 160 \psi^{\frac 7 4} \leq 160 \psi^{\frac 6 4} = 160 \psi^{\frac 3 2}$
in view of the fact that $\phi \leq 1$. Also
$\psi'' =  ( \phi^4)'' = (4 \phi^3 \phi')' = 12 \phi^2 |\phi'|^2 + 4 \phi^3 \phi''
\geq -40 (\phi^3)\phi = -40 \psi$.
Hence (iv) and (v) are also satisfied.
\end{proof}

 \begin{lemma}\label{cut}
Let $A,B >0$.
We may choose a cut-off function satisfying 
\begin{itemize}
 \item[(i)] $0 \leq \phi \leq 1$
\item[(ii)]  $ \phi(r) = 1$ for all $r \leq A$, 
$\phi(r) = 0$  for all $r \geq A + B$
\item[(iii)] $\phi$ is decreasing: $\phi' \leq 0$.
\item[(iv)] $\phi^{''} \geq -k_0(A,B) \phi$, $(\phi')^2 \leq k_0(A,B) \phi$,
\item[(v)] $|\phi^{'}|^2 \leq k_0(A,B) \phi^{\frac 3 2}$ for some constant $0<k_0(A,B)<\infty$.
\end{itemize}
\end{lemma}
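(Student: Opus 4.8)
The plan is to obtain Lemma~\ref{cut} from the previous lemma by a simple affine rescaling of the variable. First I would take the fixed cut-off function $\phi_0:[0,\infty)\to\R_0^+$ produced by the preceding lemma, which satisfies (i)--(v) with the universal constant $200$, and set
\[
\phi(r) := \phi_0\!\left(\frac{r-A}{B}+1\right).
\]
Then (i) is immediate since $\phi$ is a reparametrisation of $\phi_0$, (ii) holds because $r\le A$ corresponds to argument $\le 1$ (where $\phi_0\equiv 1$) and $r\ge A+B$ corresponds to argument $\ge 2$ (where $\phi_0\equiv 0$), and (iii) holds because the inner map $r\mapsto (r-A)/B+1$ is increasing, so $\phi'(r)=\tfrac1B\phi_0'(\cdot)\le 0$.

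Next I would read off the derivative bounds. By the chain rule $\phi'(r)=\tfrac1B\phi_0'(s)$ and $\phi''(r)=\tfrac1{B^2}\phi_0''(s)$ where $s=(r-A)/B+1$. Using $\phi_0''\ge -200\phi_0$ gives $\phi''(r)\ge -\tfrac{200}{B^2}\phi_0(s)=-\tfrac{200}{B^2}\phi(r)$; using $(\phi_0')^2\le 200\phi_0$ gives $(\phi'(r))^2\le\tfrac{200}{B^2}\phi(r)$; and using $|\phi_0'|^2\le 200\phi_0^{3/2}$ gives $|\phi'(r)|^2\le\tfrac{200}{B^2}\phi(r)^{3/2}$. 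Hence all of (iv) and (v) hold with the single choice
\[
k_0(A,B) := \frac{200}{B^2},
\]
which in fact depends only on $B$; one could equally well just write $k_0(A,B)=\max\{1,200/B^2\}$ if a lower bound on the constant is convenient later. This is the entire argument.

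I do not expect any genuine obstacle here: the only thing to be careful about is the direction of the affine substitution (it must be increasing so that monotonicity is preserved and so that the argument lands in $[0,1]$ resp.\ $[2,\infty)$ at the right endpoints), and the bookkeeping of the $1/B^2$ factors coming from differentiating twice. Note that the rescaling does \emph{not} introduce any dependence on $A$ beyond the translation, so strictly speaking $k_0$ depends only on $B$; the statement is phrased with $k_0(A,B)$ merely for uniformity with how it is invoked in the next section.
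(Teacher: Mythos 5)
Your proposal is correct and is essentially identical to the paper's own (one-line) proof: the paper defines $\ti\phi(r):=\phi\bigl(\frac{r+B-A}{B}\bigr)$, which is exactly your substitution $\frac{r-A}{B}+1$, and you have merely written out the chain-rule bookkeeping that yields $k_0(A,B)=200/B^2$. No issues.
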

\begin{proof}
By shifting and scaling:
Define $\ti \phi(r) := \phi( \frac{r + B - A}{B}  )$  where $\phi$ is
the function appearing in the above Lemma. Then $\ti \phi$ has all
of the desired properties
\end{proof}

{\bf Construction of a cut-off function on a Riemannian manifold which
  is evolving by Ricci flow.}

\noindent Now we construct a cut-off function similar to that
constructed by G.Perelman (see proof of Theorem 10.1 in \cite{Pe1})
and similar to that used by B.-L. Chen in \cite{BLC}. Assume that we have a solution
to Ricci flow $(M,g(t))_{t \in [0,T)}$.
We do not assume that the curvature is bounded uniformly on some region
for all $t \in [0,T)$ as in the argument of B.L-Chen in the
proof of proposition 2.1 in \cite{BLC}.
Instead we assume a  uniform estimate of the form
\begin{itemize}
\item[(c)] $|\Riem| \leq  \frac{c_0}{t}$ on ${{}^tB}_{\frac 1 4}(x_0)$ for
$t \in [0,S)$
\end{itemize}
for some $S \leq \frac 1 {100}$, $S \leq T$.
Note: The radius of the ball $ \frac 1 4$ is chosen for
convenience. If we replace $ \frac 1 4$  by $\si >0$, then all
constants occurring in this section also depend on $\si$.
This estimate combined with Lemma 8.3  of \cite{Pe1}
guarantees that the cut-off function we construct will satisfy
estimates which are sufficient for the arguments in the following section.

Let $\phi: [0, \infty) \to \R^+_0$ be one of the  cut-off functions
defined above with $A \leq 1$.

Let $r_0(t) = \sqrt{t}$ and $K(t) = \frac{c_0}{t}$.
Then $|\Ricci(x,t)| \leq (n-1)K$ whenever $d(x,x_0,t) \leq r_0(t)$
for all $t \leq S $ in view of (c). Hence, using Lemma 8.3 of \cite{Pe1}, we have
\begin{eqnarray*}
\partt d_t(x) - \lap d_t (x) && \geq -(n-1)(\frac 2 3 Kr_0 + r_0^{-1}) \cr  
&& = -(n-1)\frac{( \frac 2 3 c_0 + 1)}{\sqrt{t}} ,
\end{eqnarray*}
in view of condition (c), 
where this inequality is valid for points $(x,t)$ where $d_t(x)= d(x_0,x,t)$ is differentiable and $t \leq S$,
and $d(x_0,x,t) \geq r_0(t) = \sqrt{t}$. Note that for $t\leq \frac{A^2}{10}$,
the last condition is satisfied 
for all $x$ outside of ${{}^tB}_{\frac A 2}(x_0)$.
The right hand side is integrable.

That is,
\begin{eqnarray*}
&& \partt (d_t(x) + 4(n-1)( c_0 + 1)\sqrt{t})  - \lap (d_t(x) + 4(n-1) (c_0 + 1)\sqrt{t}) \cr
 && \ \ \ \  \ \ \ \geq  \frac{2(n-1)(c_0 + 1)}{\sqrt{t}}  >0.
\end{eqnarray*}
for such points.
Let us denote the constant appearing here as $m_0 = m_0(c_0,n) =
(n-1)(c_0 + 1) $.
Let $k(x,t) = \phi(d_t(x) +  4m_0 \sqrt{t})$.
Using the above information, we obtain the following  evolution inequality for $k$
\begin{eqnarray}
(\partt - \lap) k&& = \phi' \cdot (\partt - \lap ) (d_t + 4m_0 \sqrt{t})(x,t) 
-(\phi'') \cr
&& \leq \phi'  \frac{m_0}{2 \sqrt{t}} +  k_0 \phi \leq k_0 \phi\label{wap}
\end{eqnarray}
where $k_0= k_0(A,B)$ comes from the above Lemma, Lemma  \ref{cut}.
 Note that $\phi( d_t(x) +  4m_0 \sqrt{t})  = 1$ for all 
$x \in {{}^tB}_{\frac A 2}(x_0)$ and $t \leq \frac{A^2}{100m_0^2}$ and hence 
 $\phi' = 0$ for all points $x$ inside ${{}^t}B_{\frac A 2}(x_0)$ 
as long as $t \leq \frac{A^2}{100m_0^2}$ , and hence
the above evolution inequality \eqref{wap} is valid for all $x\in M$ and
all $t \leq \frac{A^2}{100m_0^2}$ ( we assume that $m_0 >>1$ ) as long
 $(x,t)$ is a point where $d_t(x)= d(x_0,x,t)$ is differentiable.
Hence $ h(x,t):= e^{ -2k_0 t} k(x,t)$ satisfies
\begin{eqnarray*}
(\partt - \lap) h(x,t) \leq
  \leq 0
\end{eqnarray*}
for all $x$ and all  $t  \leq \frac{A^2}{100m_0^2}$, as long as
$d(x_0,\cdot,\cdot)$ is differentiable there.
We collect the definitions and observations made above in the following.

\begin{proposition}\label{cutty}
Let $(M,g(t))_{t \in [0,T)}$ by a smooth complete solution to Ricci flow
and $\phi$ be one of the functions appearing in Lemma \ref{cut}.
 with $A \leq 100$.
We assume that 
\begin{itemize}
\item[(c)] $|\Riem| \leq  \frac{c_0}{t}$ on ${{}^tB}_{\frac 1 4}(x_0)$ for
$t \in [0,S)$
\end{itemize}
for some $S \leq \frac 1 {100}$, $S \leq T$.
$h:M \to \R$ is the function $ h(x,t):= e^{-2k_0t} ( \phi(d_t(x) +
4m_0 \sqrt{t}))$ where $d_t(x):= d_t(x_0,x)$, and $x_0$ is a fixed point
in $M$ and $m_0 = m_0(c_0,n) =
(n-1)(c_0 + 1) $, $k_0 = k_0(A,B)$.
For $t \leq \frac{A^2}{100m_0^2}$,
we have
\begin{eqnarray*}
(\partt - \lap) h(x,t) \leq 0,
\end{eqnarray*}
as long as $d(x_0,\cdot,\cdot)$ is differential at $(x,t)$.
$ h \equiv 0 $ for all $d_t(x) \geq (A+ B)$
and $ h \equiv e^{-2k_0t}$ for all $d_t(x) \leq A - 4m_0 \sqrt t $ and
$ h(x,t) \leq e^{-2k_0t}  \leq  1$.

\end{proposition}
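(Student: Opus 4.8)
The proposition merely packages the estimates derived in the discussion immediately preceding it, so the plan is to assemble those pieces into a self-contained argument. The three assertions about the support and size of $h$ I would dispatch first, as they are immediate from the profile of $\phi$ guaranteed by Lemma \ref{cut}: if $d_t(x) \ge A+B$ then $d_t(x) + 4m_0\sqrt{t} \ge A+B$ and $\phi \equiv 0$ there by (ii), so $h(x,t) = 0$; if $d_t(x) \le A - 4m_0\sqrt{t}$ then $d_t(x) + 4m_0\sqrt{t} \le A$ and $\phi \equiv 1$ there, so $h(x,t) = e^{-2k_0 t}$; and $0 \le \phi \le 1$ together with $t \ge 0$ gives $h(x,t) \le e^{-2k_0 t} \le 1$. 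The remaining content is the differential inequality $(\partt - \lap) h \le 0$, and my plan for it has three steps: (i) convert hypothesis (c) into a lower bound for $(\partt - \lap) d_t$ via Lemma 8.3 of \cite{Pe1}; (ii) push this through the chain rule for $\phi$ composed with $d_t + 4m_0\sqrt{t}$, using the monotonicity and convexity estimates (iii)--(iv) of Lemma \ref{cut}; (iii) absorb the residual error by the exponential factor $e^{-2k_0 t}$.

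For step (i), I would first note that hypothesis (c) yields $|\Ricci(x,t)| \le (n-1) c_0 / t$ whenever $d_t(x) \le r_0(t) := \sqrt{t}$, since $\sqrt{t} \le \sqrt{S} \le \tfrac{1}{10} < \tfrac14$ forces such points into ${}^{t}B_{1/4}(x_0)$. Feeding $r_0(t) = \sqrt{t}$ and $K(t) = c_0/t$ into Lemma 8.3 of \cite{Pe1} gives, at points $(x,t)$ with $d_t$ differentiable, $t \le S$ and $d_t(x) \ge \sqrt{t}$,
\[
(\partt - \lap) d_t(x) \ \ge\ -(n-1)\Bigl( \tfrac{2}{3} K r_0 + r_0^{-1} \Bigr) \ =\ -\frac{(n-1)\bigl(\tfrac{2}{3} c_0 + 1\bigr)}{\sqrt{t}},
\]
and adding $\partt\bigl(4 m_0 \sqrt{t}\bigr) = 2 m_0 / \sqrt{t}$, with $m_0 = (n-1)(c_0+1)$, produces $(\partt - \lap)\bigl(d_t + 4 m_0 \sqrt{t}\bigr) \ge m_0 / \sqrt{t} > 0$ there. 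I would then record that for $t \le A^2/(100 m_0^2)$ one has $4 m_0 \sqrt{t} \le \tfrac{2A}{5}$, so $d_t(x) \le A/2$ implies $d_t(x) + 4 m_0 \sqrt{t} < A$ and hence $k(x,t) := \phi\bigl(d_t(x) + 4 m_0 \sqrt{t}\bigr) \equiv 1$ near such points, whereas $d_t(x) > A/2 \ge \sqrt{t}$ lands exactly in the regime where the Lemma 8.3 estimate applies.

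For step (ii), writing $f := d_t + 4 m_0 \sqrt{t}$ and using $|\grad d_t| = 1$ at points of differentiability, the chain rule gives $(\partt - \lap) k = \phi'(f)\,(\partt - \lap) f - \phi''(f)$. On $\{ d_t(x) \le A/2 \}$ all spatial and time derivatives of $k$ vanish, so the right-hand side is $0 \le k_0 k$; on $\{ d_t(x) > A/2 \}$ the quantity $(\partt - \lap) f$ is positive and $\phi'(f) \le 0$ by (iii), so $\phi'(f)(\partt - \lap) f \le 0$, while $-\phi''(f) \le k_0 \phi(f) = k_0 k$ by (iv). Hence $(\partt - \lap) k \le k_0 k$ at every point of differentiability and every $t \le A^2/(100 m_0^2)$. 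Step (iii) is then immediate: with $h = e^{-2k_0 t} k$,
\[
(\partt - \lap) h \ =\ e^{-2k_0 t}\bigl( -2 k_0 k + (\partt - \lap) k \bigr) \ \le\ e^{-2k_0 t}\bigl( -2 k_0 k + k_0 k \bigr) \ =\ -k_0\, e^{-2k_0 t} k \ \le\ 0,
\]
since $k \ge 0$, which is the claimed inequality.

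The one step that genuinely needs care — and the one I would flag as the crux — is the matching in step (i) of the two regions: the distance-Laplacian comparison of Lemma 8.3 of \cite{Pe1} is valid only where $d_t(x) \ge \sqrt{t}$, while the cut-off is non-constant on an annulus that could a priori reach smaller radii. The time restriction $t \le A^2/(100 m_0^2)$ (together with $A \le 100$, which keeps $4 m_0 \sqrt{t}$ a controlled fraction of $A$) is precisely what forces $\phi' = \phi'' = 0$ throughout ${}^{t}B_{A/2}(x_0)$ and thus confines the whole analysis to the region $d_t(x) > A/2 \ge \sqrt{t}$ where the comparison is available. Everything else — the chain rule for a radial function of the distance, the vanishing of $\lap(4 m_0 \sqrt{t})$ in space, and the sign bookkeeping supplied by (iii)--(iv) of Lemma \ref{cut} — is routine, and no barrier or viscosity technicality enters, because the statement is asserted only at points where $d_t$ is differentiable, these subtleties being deferred to wherever the proposition is later invoked through a maximum-principle argument.
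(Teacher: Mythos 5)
Your proposal is correct and follows essentially the same route as the paper, which proves the proposition in the discussion immediately preceding its statement: Lemma 8.3 of \cite{Pe1} with $r_0=\sqrt{t}$, $K=c_0/t$ to bound $(\partt-\lap)d_t$ from below, the observation that $\phi'=\phi''=0$ on ${}^{t}B_{A/2}(x_0)$ for $t\leq A^2/(100m_0^2)$ so that the comparison is only needed where $d_t\geq\sqrt{t}$, the chain-rule estimate $(\partt-\lap)k\leq k_0 k$ using (iii)--(iv) of Lemma \ref{cut}, and absorption by the factor $e^{-2k_0t}$. The support and size claims are handled identically, and deferring the non-differentiable points is consistent with the paper's treatment.
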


\section{A local result in two dimensions}\label{local sec}
In this section we restrict ourselves to the two dimensional case.
We consider  a ball of radius $r$ in a two dimensional manifold which
has curvature operator and volume 
bounded from below by known constants.
We show that a ball of smaller radius will smooth out quickly at least for a short time.
The rate of this smoothing depends on the bounds from below and $r$.


\begin{theorem}\label{2dd}
Let $(M^2, g(t)) _{t \in [0,T)}$ be a smooth complete solution to Ricci flow
and let $x_0 \in M$, $N, v_0,r>0$ and $1>\si,\al >0$.
Assume that
\begin{itemize}
\item $\vol{{}^{0} B_{r}(x_0)} \geq v_0 r^2$ and
\item $\Sc(g(0)) \geq -  \frac {N}{r^2}$ on  ${}^{0} B_{r}(x_0)$,
\end{itemize} 
and $1>\si >0$.
Then there exists a $ \ti v_0 = \ti v_0(v_0,\si,N ,\al)>0 $ and  a $\de_0 = \de_0(v_0,\si,N,\al) >0$ such that
\begin{itemize}
\item $\vol({{}^{t} B}_{r}(x_0)) \geq  \ti v_0 r^2$
\item $\Sc(g(t)) \geq -  \frac{(N + \al)}{r^2}$ on  $ {}^{t} B_{(1-\si)r}(x_0).$
\item $|\Sc(g(t))| \leq \frac{1}{\de_0 t} $  on $ {}^{t} B_{(1-\si)r}(x_0)$
\end{itemize}
as long as $t \leq r^2(\de_0)^2$  and $t \in [0,T)$.
\end{theorem}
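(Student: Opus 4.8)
By scaling (cf.\ the Remark after Theorem \ref{twoD}) it suffices to treat $r=1$, so $r=1$ throughout. I would obtain the three conclusions simultaneously by a continuity (bootstrap) argument whose engine is the interplay of Theorem \ref{localest2} --- which converts lower bounds on volume and on $\curlR$ into an upper bound $|\Riem|\le N_1^2/t+N_1^2$ --- with Proposition \ref{cutty}, which, from a bound of precisely this shape near $x_0$, produces a cut-off function with which one runs a localized maximum principle for $\Sc$. Fix the intermediate radius $\rho_*=1-\tfrac{\si}{2}$ and constants $\La<\infty$, $\ti v_0>0$, $\de_0>0$ (large, resp.\ small, to be pinned down at the end), and let $S^*\in(0,T]$ be the supremum of those $s\le\de_0^2$ such that for all $t\in[0,s)$: \emph{(A)} $|\Riem(g(t))|\le\tfrac{\La^2}{t}$ on ${}^t B_{\rho_*}(x_0)$; \emph{(B)} $\Sc(g(t))\ge-(N+\al)$ on ${}^t B_{\rho_*}(x_0)$; \emph{(C)} $\vol({}^t B_{\rho_*}(x_0))\ge\ti v_0$. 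Since $\Riem(g(0))$ is bounded on ${}^0 B_{\rho_*}(x_0)$ while $\La^2/t\to\infty$, and since $\Sc(g(0))\ge-N>-(N+\al)$ and, by Bishop--Gromov applied to $g(0)$ (for which $\Ric=\tfrac{\Sc}{2}g\ge-\tfrac N2 g$ as $n=2$), $\vol({}^0 B_{\rho_*}(x_0))>\ti v_0$ hold strictly, all of (A)--(C) hold for some $s>0$, so $S^*>0$. I would then show that on $[0,S^*)$ each of (A)--(C) in fact holds with \emph{strict} inequality; by continuity and openness this forces $S^*\ge\min(\de_0^2,T)$, and on $[0,\min(\de_0^2,T))$ the items (A)--(C), together with the sharper curvature bound produced below, are exactly the three assertions (conclusion $1$ being immediate from (C), as ${}^t B_1(x_0)\supseteq{}^t B_{\rho_*}(x_0)$).

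\emph{Improving (A), and conclusion $3$.} On $[0,S^*)$ one has $\vol({}^t B_{\rho_*}(x_0))\ge\ti v_0$ and $\curlR(g(t))\ge-\tfrac{N+\al}{2}$ on ${}^t B_{\rho_*}(x_0)$ (in dimension two $\curlR\ge-V$ and $\Sc\ge-2V$ both say $K\ge-V$). Applying Theorem \ref{localest2} to the solution restricted to $[0,S^*)$, on the ball of radius $\rho_*$ and with shrink parameter $1-\tfrac{1-\si}{\rho_*}\in(0,1)$, yields $N_1=N_1(v_0,N,\si,\al)<\infty$ with $|\Riem(g(t))|\le\tfrac{N_1^2}{t}+N_1^2\le\tfrac{2N_1^2}{t}$ on ${}^t B_{1-\si}(x_0)$ for $t\in[0,\min(S^*,\rho_*^2/N_1^2))$. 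As $|\Sc|\le C_0|\Riem|$ in dimension two, this is conclusion $3$ once $\de_0\le(2C_0N_1^2)^{-1}$; and, provided $\La^2\ge 2N_1^2$ and $\de_0$ is small enough that ${}^t B_{\sqrt t}(x_0)\subseteq{}^t B_{1-\si}(x_0)$ for $t\le\de_0^2$, it improves the only consequence of (A) that is actually used below --- the bound $|\Riem(g(t))|\le\La^2/t$ on the tiny ball ${}^t B_{\sqrt t}(x_0)$, which is all that enters Lemma 8.3 of \cite{Pe1} in the construction of the cut-off (as the proof of Proposition \ref{cutty} shows).

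\emph{Improving (B) and (C).} For (B): feed the bound $|\Riem(g(t))|\le\La^2/t$ on ${}^t B_{\sqrt t}(x_0)$ into Proposition \ref{cutty}, with $A,B$ chosen so that $A+B\le1$ while $A-4m_0\sqrt t\ge\rho_*$ for $t\le\de_0^2$; this gives $h=e^{-2k_0 t}\phi(d_t(x)+4m_0\sqrt t)$ with $0\le h\le1$, $\operatorname{supp}h(\cdot,0)\subseteq{}^0 B_1(x_0)$, $h\equiv e^{-2k_0 t}$ on ${}^t B_{\rho_*}(x_0)$, $(\partt-\lap)h\le-k_0 h$ where $d_t(x_0,\cdot)$ is differentiable, and the crucial $|\nabla h|^2\le k_0 h^{3/2}$, with $m_0=m_0(\La^2)$, $k_0=k_0(\La^2,\si)$. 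In dimension two $(\partt-\lap)\Sc=\Sc^2\ge0$, so $v:=\Sc+N+\al$ is a supersolution with $v\ge\al>0$ on $\operatorname{supp}h(\cdot,0)$ at $t=0$. Tracking $\mu(t):=\min_M(vh)$: at a first time and point where $vh$ would drop below a threshold comparable to $\sqrt t$, the point lies in the interior of $\operatorname{supp}h$, and there --- using $\nabla(vh)=0$, $(\partt-\lap)v\ge0$, $(\partt-\lap)h\le-k_0h$, and $|\nabla h|^2\le k_0 h^{3/2}$, the last being exactly what controls the gradient cross term $-2\langle\nabla v,\nabla h\rangle$ near the edge of $\operatorname{supp}h$ --- one obtains a differential inequality for $\mu$ that forbids the drop. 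Hence $\Sc(g(t))\ge-(N+\al)+o(1)$ on ${}^t B_{\rho_*}(x_0)$, with $o(1)\to0$ as $t\to0$, so for $\de_0=\de_0(k_0,\al)$ small this is $>-(N+\al)$, improving (B). For (C): from improved (B), $\Ric(g(t))\ge-\tfrac{N+\al}{2}g(t)$ on ${}^t B_{\rho_*}(x_0)$, so distances from $x_0$ to nearby points grow at most geometrically and ${}^0 B_{\rho_0}(x_0)\subseteq{}^t B_{\rho_*}(x_0)$ for a fixed $\rho_0=\rho_0(\si)<\rho_*$ and $t$ small; comparing ${}^t B_{\rho_*}(x_0)$ with this fixed set $E:={}^0 B_{\rho_0}(x_0)$ via $\tfrac{d}{dt}\vol_t(E)=-\int_E\Sc\,d\mu_t$, using the two-dimensional Gauss--Bonnet bound on the total curvature of a disk (so that the region where $\Sc$ is as large as $2N_1^2/t$ carries correspondingly small area and no volume is lost as $t\to0$), and bounding $\vol_0(E)$ below by Bishop--Gromov, gives $\vol({}^t B_{\rho_*}(x_0))\ge\ti v_1$ with $\ti v_1=\ti v_1(v_0,N,\si)>0$; taking $\ti v_0<\ti v_1$ improves (C).

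\emph{Closing the loop, and the main obstacle.} The constants close up if fixed in the order: the volume step gives $\ti v_1=\ti v_1(v_0,N,\si)$, set $\ti v_0:=\tfrac12\ti v_1$; Theorem \ref{localest2} then gives $N_1=N_1(v_0,N,\si,\al)$, set $\La:=2N_1$; with $\La$ pinned, Proposition \ref{cutty} gives $k_0,m_0$ and hence the error $o(1)$ depending only on $v_0,N,\si,\al$; finally choose $\de_0=\de_0(v_0,N,\si,\al)>0$ so small that $\de_0\le(2C_0N_1^2)^{-1}$, $\de_0\le\rho_*/N_1$, the error $o(1)<\al$ on $[0,\de_0^2]$, and all the ``$t$ small'' provisos above hold. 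With these choices the paragraphs above give (A)--(C) with strict inequalities on $[0,S^*)$, hence $S^*\ge\min(\de_0^2,T)$ and the theorem. I expect the delicate point to be the improvement of (B): arranging that the localized maximum principle costs only an $o(1)$ loss rather than a fixed loss, which is precisely where the special cut-off property $|\nabla h|^2\le k_0 h^{3/2}$ must be used to absorb the gradient cross term near $\partial\!\operatorname{supp}h$. Secondarily, the volume estimate genuinely uses two-dimensionality (Gauss--Bonnet), the pointwise bound $\Sc\le 2N_1^2/t$ being non-integrable in $t$ and thus insufficient on its own; one must also invoke the integrable distance-distortion rate $\partt d_t(x_0,\cdot)\ge-\beta/\sqrt t$ from Lemma 8.3 of \cite{Pe1}.
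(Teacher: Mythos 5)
Your overall architecture coincides with the paper's: scale to $r=1$, run an open--closed argument on the maximal interval where the lower bound on $\Sc$ and the volume lower bound persist, feed these into Theorem \ref{localest2} to get $|\Riem|\le N_1^2/t+N_1^2$ (which yields conclusion $3$ and is the input to Proposition \ref{cutty}), and then propagate the lower scalar bound by a maximum principle applied to $h\Sc$ plus a $\sqrt t$-type correction, absorbing the cross term $-2g(\nabla h,\nabla\Sc)$ via $|\nabla h|^2\le k_0h^{3/2}$ --- this is exactly Claim (i) of the paper's proof. Two points are glossed over: the paper needs Calabi's trick (its Claim (iii)) to run the maximum principle at points where $d_t(x_0,\cdot)$ fails to be differentiable, and you should record why your first violating point lies where $h>0$ (in the paper this is automatic because $h\Sc+\sqrt t=\sqrt t>-(N+\tfrac\al2)$ wherever $h=0$).

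The genuine gap is in your volume step. You propose $\tfrac{d}{dt}\vol_t(E)=-\int_E\Sc\,d\mu_t$ for a fixed set $E$ and assert that ``the two-dimensional Gauss--Bonnet bound on the total curvature of a disk'' controls $\int_E\Sc^+\,d\mu_t$ uniformly as $t\to0$. Gauss--Bonnet for a disk gives $\int_EK\,d\mu_t=2\pi-\int_{\partial E}k_g$, and nothing in your hypotheses bounds $\int_{\partial E}k_g$ from below (moreover the fixed set $E$ is not a metric disk for $g(t)$); since the pointwise bound $\Sc\le 2N_1^2/t$ is, as you yourself note, non-integrable in $t$, this is precisely the point that must carry the argument, and Gauss--Bonnet as stated does not carry it. (A correct total-curvature bound of Fiala type for distance disks in surfaces with curvature bounded below does exist, but it requires an argument past the cut locus which you do not supply.) The paper avoids the volume-form computation altogether: under (a) and (c) it applies Hamilton's two-sided distance-distortion estimate (Theorem 17.2 of \cite{HaFo}) to get $-(N+2\al)d\ge\partial_t d\ge-c_1/\sqrt t$ --- an integrable rate --- for pairs of points in ${}^tB_{1/100}(x_0)$, after first checking that minimizing geodesics between such points stay in ${}^tB_{1/4}(x_0)$, and then invokes the volume comparison of Corollary 6.2 of \cite{Si3}. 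You would need either to import that argument or to make your Fiala-type inequality precise; as written, the volume step does not close.
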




\begin{remark}
In Theorem 3.1 of \cite{BLC} B.-L. Chen proved the following similar result: if we
assume the above conditions with $r = 1$  but replace the lower bound on the scalar
curvature by the condition that $    | \Sc (g_0)| \leq 1$ on ${{}^0B}_2(x_0)$, then 
$ |\Sc(g(t)|\leq 2$ for all $t \in [0,T(n,v_0))$ on a smaller ball. This a version 
of G.Perelman's second Pseudolocality result, Theorem 10.3 of \cite{Pe1} in dimension two.
Note that in this case,  the curvature bound and volume bound from below guarantee
that balls of radius $r \leq R = R(n,\ep,v_0)$ which are sufficiently
small satisfy the {\it  almost euclidean} condition $\vol({{}^{0} B}_{r}(x_0)) \geq (1- \ep) r^2$ .
\end{remark}

\begin{proof}  
By scaling, it suffices to prove the theorem for $r=1$. W.l.o.g. $\si
> \frac 1 2$. 
By the Bishop-Gromov volume comparison principle we have 
$\vol(   {{}^0 B}_{s}(x_0)) \geq c(N,v_0)s^n $ for all $s \leq 1$.
In particular 
$\vol( {{}^0 B}_{\frac 1 {1000} } (x_0)) \geq \ti v_0(N,v_0)>0 $.
For some maximal time interval $[0,S_{max})$, $0<S_{max} \leq T$ we have (due to smoothness) that  
\begin{itemize}
 \item[(a)]  $\Sc \geq - (N + 2\al) $ on ${{}^t B}_{(1-\si)}(x_0)$
\item[(b)] $\vol({{}^t B}_{\frac {1}{1000}}(x_0)) \geq \frac {\ti v_0}{2}  $
\end{itemize} 

Note that $(b)$ implies $(\ti b) : \vol( {{}^t B}_{1 } (x_0)) \geq \frac{\ti
v_0(N,v_0)}{2}>0 $ trivially.

Our aim is to obtain an estimate from below for the time $S_{max}$ which
only depends on $N,v_0,\si,\al$ ($n = 2$ is fixed here).
According to Theorem \ref{localest} above, we have that 
\begin{itemize}
\item[(c)] $|\Sc| \leq \frac{c_0(\ti v_0,N,\si,\al)}{t}$ on
  ${{}^tB}_{\frac 1 4}(x_0)$ \hfill\break for
$t \in [0,S_{max})\cap [0,S(N,\ti v_0,\si,\al)) =: [0,S(N,v_0,\si,\al) )$
\end{itemize}
for some constant $c_0 = c_0(N,\ti v_0,\si,\al ) = c_0(N,v_0,\si,\al) $.
In the rest of the proof we often shorten time intervals  $[0,S(N,v_0,\si,\al) )$
to $[0, \ti S(N,v_0,\si,\al) )$ where $0<\ti S(N,v_0,\si,\al) <S(N,v_0,\si,\al)
$. We will denote $\ti S(N,v_0,\si,\al) $ also by $S(N,v_0,\si,\al)$.

{\bf Claim (i)  } The scalar curvature is bounded from below by $-(N +
\al)$ on
${{}^t B}_{(1-\si)}(x_0)$
for $t \leq S$ where $S = S(v_0, \si,\al , N) >0$, as long as $t
\leq S_{max}$.
That is, (a) is not violated for $t \leq  S$ as long as (b) still
holds.

\noindent proof of Claim (i):
\noindent
We modify the argument of B.-L.Chen given in the proof of Theorem 3.6 in \cite{BLC}.
Let $f:= h  \Sc$ where we have chosen $\phi$ in the definition of $h$
of Proposition \ref{cutty}  to be a smooth function
with $\phi(r) = 0$ for $r \geq (1 -\frac \si 4)$, and $\phi(r) = 1$ for
$r\leq (1-  \frac \si 2)$.
Note that $m_0 = m_0(c_0,n) = m_0(c_0(v_0,N,\si,\al),2) = m_0(v_0,\si,\al,N)$ in this
case,
where $m_0$ is the constant appearing in Proposition \ref{cutty} .
In the following we assume without loss of generality, that  $t \leq
\frac{A^2 = (1-\frac{\si}{2})^2 }{100m_0^2}$, so that Proposition \ref{cutty} is valid.

Using the evolution inequality  for $h$ and the evolution equation
for $\Sc$ we see that at any point $(x,t)$ where  $\Sc(x,t) < 0$ and
$d(x_0, \cdot, \cdot) : M \times [0,T) \to \R $ is differentiable we have
\begin{eqnarray}
&& \partt ( h \Sc + \sqrt{t}  )  - \lap( h \Sc + \sqrt{t}  ) \cr
&& \ \ = \Sc (\partt -\lap)( h)  + h (\partt - \lap) ( \Sc)  - 2g( \grad
h, \grad \Sc ) 
+ \frac{1}{2 \sqrt{t}   }\cr
&& \ \ \geq 2h |\Ricci|^2  - 2g( \grad
h, \grad \Sc ) 
+ \frac{1}{2 \sqrt{t}.  }  \label{imp}
\end{eqnarray}
If $(x,t)$ is a first time and point where 
$( h(x,t) \Sc(x,t) +
\sqrt{t}) =-(N + \frac {\al}{2} )$ then 
the gradient term at $(x,t)$ can be estimated as follows.
\begin{eqnarray*}
- 2g(\grad \Sc, \grad h)   && =
  - \frac 2 h g(\grad (\Sc h ), \grad h)
+ 2 \Sc \frac{|\grad h|^2 }{h} \cr
&& =   2 \Sc \frac{|\grad h|^2 }{h} \cr
&& \geq -\frac{(\Sc)^2}{4} h - (\frac 4 {h^3})   |\grad h|^4\cr
&& \geq -\frac{(\Sc)^2}{4} h - 2C(\si) \cr
\end{eqnarray*}
where in the last line we have used that
$|\grad d| = 1$ and 
$|\phi'|^2 \leq C(A,B) \phi^{\frac 3 2}$ with 
 $B = \frac \si 4$, 
$A =  (1-  \frac \si 2)$.
Now using this inequality in \eqref{imp} we get
\begin{eqnarray*}
\partt ( h \Sc +  \sqrt{t}  )  - \lap( h \Sc +  \sqrt{t}  ) &&
\geq 2h |\Ricci|^2 -\frac{(\Sc)^2}{n} h - 2C  +  \frac{1}{2 \sqrt{t}   } \cr
&&> 0
\end{eqnarray*}
at the point $(x,t)$ in question, since $( h(x,t) \Sc(x,t) +
\sqrt{t}) = - (N + \frac{\al}{2})$  guarantees that $\Sc(x,t) < 0$, as long
 as $d(x_0, \cdot, \cdot)  $ is differentiable at $(x,t)$ and $t \leq S(v_0,N,\si,\al)$.
Hence, in view of the maximum principle, we see that $ h \Sc +
\sqrt{t} \geq 0$ for all 
$t \leq  S(N,\si,v_0,\al)$ as long as $t \leq
S_{max}$
( for the case that $(x,t)$ is not a point where $d$ is differentiable,
then the argument is still valid, as we explain in Claim (iii) at the end
of the proof).
In particular, this shows that $\Sc \geq -(N +\frac{3\al}{4})$ for $x
\in {{}^tB}_{1-\si}(x_0)$ as long as 
$t \leq  S(N,\si,v_0,\al)$ (possibly a smaller $S$) and $t \leq
S_{max}$, in view of the definition
 $ h(x,t):= e^{-2k_0t} ( \phi(d_t(x) +
8m_0 \sqrt{t})$, 
which is close as we like to one on ${{}^tB}_{1-\si}(x_0)$
for $t \leq  S(N,\si,v_0,\al)$ small enough.

This finishes the proof of the {\bf Claim (i)}.

\noindent {\bf Claim (ii)} 

The volume condition (b) is not violated for a well
defined time interval, as long as (a) holds.
Let $x, y \in {{}^tB}_{\frac {1}{100}}(x_0)$ then $d(x,y,t) \leq d(x,x_0,t) +
d(y,x_0,t) \leq \frac{2}{100}$ and hence any  shortest
geodesic between $x$ and $y$ must lie in ${{}^t B}_{\frac 1 4}(x_0)$
(proof: If it didn't, smooth out the union of the two radial curves (measured with respect to $g(t)$)
going from $x$ to $x_0$ and then back to $y$ of length $\frac{2}{100}$. This would
result in a curve of length less
then $\frac{3}{100}$.
Any curve which starts in  $ {{}^tB}_{\frac{1}{100}}(x_0)$ , reaches
$\boundary {{}^tB}_{\frac 1 4}(x_0)$ and finishes
in  $ {{}^tB}_{\frac 1 {100}}(x_0)$ must  have length larger than or equal to
$\frac {1}{ 20}$.
Hence, if a minimising Geodesic between $x$ and $y$ leaves
${{}^tB}_{\frac 1 4}(x_0)$ we obtain a contradiction).
Hence using the estimate of Hamilton (see Theorem 17.2 of \cite{HaFo} and the Editors' comment thereon
in \cite{CaChChYa} or, alternatively, see Appendix B in \cite{Si3} )
and the fact that (a) and  (c) hold on $ {{}^tB}_{\frac 1 4 }(x_0)$, we get
\begin{eqnarray*}
  && -(N + 2\al)d(y,x,t)   \geq \partt d(y,x,t) \geq - \frac{c_1(c_0)}{\sqrt{t}}  \cr
&& \mbox{ for all } \ \  s \leq t \leq \min(S_{max},S(N,v_0,\si,\al)) \ \ x,y \in
{{}^tB}_{\frac 1 {100}}(x_0),
\end{eqnarray*}
where $r \geq \partt d \geq m$ is meant in the sense of forward
difference quotients ( see Theorem 17.2 of \cite{HaFo} ).
Note $c_1(c_0) = c_1(v_0,\si,N,\al)$.
Integrating in time we get
\begin{eqnarray*}
 && e^{-(N+\al)(t-s)} d(x_0,x,s)  \geq d(x,y,t) \geq d(x,y,s) - 2c_1 \sqrt{ t} \cr
&&\ \ \mbox{ for all } \ \  s \leq t \leq \min(S_{max},S(N,v_0,\si,\al))  \ \ x,y \in
{{}^tB}_{\frac 1 {100}}(x_0)
\end{eqnarray*}
Arguing as in Corollary 6.2 of \cite{Si3}, we see that
$\vol({^{t}B}_{\frac 1 {1000}}(x_0)) \geq \frac 3 4 \ti v_0$ for all
$t \leq S(N,v_0,\si,\al) $ ( we have possibly decreased $S$ once again
here) as long as $t \leq S_{max}$. That is the volume condition is not
violated for the time interval $[0,S(N,\si,v_0,\al)]$  (as long as $t \leq S_{max}$).

In particular we see that the second condition (b) will not be violated for some well defined time interval
$[0,S(N,\si,v_0,\al)]$  (as long as $t \leq S_{max}$).
This finishes the proof of  {\bf Claim (ii)} and of the theorem
if we accept Claim (iii) below.

\noindent{\bf Claim (iii)} 
If $d_t(\cdot)$ is not
differentiable at $x \in M$ then we use the trick of E.Calabi
(\cite{Ca}) as follows.
Let $y_0$ be a point on a shortest geodesic between $x_0$ and $x$
which is very close to $x_0$. By smoothness, we can find a small open 
neighbourhood $P$ of $(x,t)$ in  $M \times (0,T)$ such that
$d(y_0,\cdot,s)$ is differentiable at $y$ for each $(y,s) \in P$.
We define $\ti d_s(y) = d(x_0,y_0,s) + d(y_0,y,s)$. Then   $\ti
d_s(\cdot)$ is differentiable at $y$ for all $(y,s) \in P$.
Furthermore, $\ti d_s(\cdot) \geq d_s(\cdot)$ for all $(y,s) \in P $ due to
the triangle inequality. Since $\phi$ is non-increasing, we therefore
have $\ti k = \phi(\ti d + m_0 \sqrt s) \leq \phi(d + m_0 \sqrt s)  =
k$ in $P$
and per definition  $\ti k(x,t) = k(x,t)$ if $(x,t)$ is the point
given at the beginning of the claim.
Also , if we pick $y_0$ very close to $x_0$ we still have
\begin{eqnarray*}
&& \partt (\ti d_t(x) + m_0 \sqrt{t})  - \lap (\ti d_t(x) + m_0\sqrt{t}) \cr
 && \ \ \ \  \ \ \ \geq  \frac{m_0}{2\sqrt{t}}  >0.
\end{eqnarray*}
where here $m_0$ is the constant appearing in Proposition \ref{cutty}.
Hence we may argue with
$ \ti h(y,s):= e^{-2k_0 s} \ti k(y,s)$ 
everywhere above.
If for example $(x,t)$  is a first time and point where $( h(x,t) \Sc(x,t) +
\sqrt{t}) = -(N +\frac {\al} {2})$ ,  then $(x,t)$ is a first time and point for which 
the function  $( \ti h(x,t) \Sc(x,t) +
\sqrt{t}) =  -(N +\frac {\al} {2})$ on the set $P$  and hence we may argue as above with  $( h(x,t) \Sc(x,t) +
\sqrt{t})$ replaced by $( \ti h(x,t) \Sc(x,t) +
\sqrt{t}) $  ( note that without loss of generality $\Sc <0$ on $P$,
since $ \Sc(x,t) <0$ and hence 
$ \ti h(y,s) \Sc(y,s) +
\sqrt{t}  \geq  h(y,s) \Sc(y,s) +
\sqrt{t} \geq  -(N +\frac {\al} {2})$ on $P \cap \{ (y,s) | s \leq t \}$).
We must also consider the case that $\ti d_{(\cdot)}(x)$ is not
differentiable in time
at the time $t$ we are considering.
In this case, all the estimates are still valid if we understand the
inequalities $ \partt \ti d_t(x) \geq m$
or $\partt \ti  d_t(x) \leq m$ in the sense of forward difference
quotients:  see \cite{HaFour}.
At times $s<t$ very close to $t$ ( $(x,t)$ as above ), we have ( due to smoothness ) 
\begin{eqnarray*}
\lap (\ti h(x,s) \Sc(x,s) +
\sqrt{s})  \geq - e , \\
| \grad (\ti h(x,s) \Sc(x,s) ) |  \leq  e
\end{eqnarray*}
where $e$ is as small as we like.
Remembering that $\ti h(x,t) >0$, we see that the term 
$ - \frac 2 {\ti h(x,s)} g(\grad (\Sc \ti h ), \grad \ti h)(x,s)$ which is zero at $(x,t)$
is also as small as we like for $s<t$ very close to $t$.
Hence, examining the proof of Claim (i)  again, we see that
$$\partt( \ti h(x,s)  \Sc(x,s) +
\sqrt{s})  >0$$  in the sense of forward difference  quotients for $s<t$ close
to $t$.

In particular, using Lemma 3.1 in in \cite{HaFour}, we see that 
$\ti h(x,t) \Sc(x,t) + \sqrt t > -(N  + \frac {\al}{2})$, which is a contradiction. 
Hence, there is no such $(x,t)$.

\end{proof}

\section{A local result in three dimensions}\label{localsec}
In this section we restrict ourselves to the three dimensional case.
We first consider  a ball of radius $1$ in a three dimensional manifold
which has curvature operator and volume  bounded from below by known
constants at time zero.
For later times we assume a bound on the curvature of the form
$| \Riem(g(t)) | \leq \frac N t$ on the  time $t$ ball of radius $1$,
where $N$ depends on the curvature bound from below.
We show that the curvature can not become too negative too quickly on
smaller balls.

\begin{theorem}\label{thred}
Let $ v_0>0$ and $N\geq 1$ be given.
Let $(M^3,(g(t)) _{t \in [0,T)}$ be a smooth complete solution to Ricci flow with $T\leq 1$, and let $x_0 \in M$  be a point
such that
\begin{itemize}
\item $\vol{{}^{0} B_{1}(x_0)} \geq v_0$ and
\item $\curlR(g(0)) \geq - \frac {1}{400 N} $ on  ${{}^{0} B}_{1}(x_0)$,
\item $ |\Riem(g(t))|(x) \leq \frac N { t} $ for all $t \in (0,T)$ for
  all $x \in  {{}^{t} B}_{1}(x_0)$
\end{itemize}
Then, for all $1>\si >0$  there exists $\de = \de(v_0,N,\si) >0$ and $\ti
v_0 = \ti v_0(N,v_0,\si) >0$ such that
\begin{itemize}
\item $\vol({ {}^{t} B}_{1}(x_0)) \geq  \ti v_0 $
\item $\curlR(g(t)) \geq -  1 $ on  $ {}^{t} B_{(1-\si)}(x_0).$
\end{itemize}
as long as $t \leq (\de_0)^2$  and $t \in [0,T)$.
\end{theorem}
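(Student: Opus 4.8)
The plan is to run the scheme of the proof of Theorem~\ref{2dd} (equivalently Theorem~\ref{twoD}), with the scalar curvature replaced everywhere by the lowest eigenvalue of the curvature operator, and with the internal appeal to Theorem~\ref{localest} replaced by the standing hypothesis $|\Riem(g(t))| \leq \tfrac{N}{t}$. First I would fix, for some maximal $S_{\max}\in(0,T]$, the conditions (a) $\curlR(x,t)\geq -2$ for all $x\in {}^tB_{1-\si/4}(x_0)$ and (b) $\vol({}^tB_{\rho}(x_0))\geq\tfrac12\ti v_0$, where $\rho$ is a small fixed radius and $\ti v_0=\ti v_0(v_0,N,\si)>0$ is produced at $t=0$ from $\vol({}^0B_1(x_0))\geq v_0$, $\curlR(g(0))\geq-\tfrac1{400N}$ and Bishop--Gromov. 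Since ${}^tB_{1/4}(x_0)\subseteq{}^tB_1(x_0)$, the third hypothesis is precisely condition (c) of Proposition~\ref{cutty} with $c_0=N$, so I obtain a cut-off $h=h(x,t)$ supported in a ball of radius slightly below $1$ about $x_0$, equal to $e^{-2k_0t}$ near $x_0$, with $(\partt-\lap)h\leq 0$; here $m_0$, and hence $k_0$, depends only on $N$ and $\si$.

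The core step is the analogue of Claim~(i): \emph{$\curlR(g(t))\geq -1$ on ${}^tB_{1-\si}(x_0)$ for all $t\leq S=S(v_0,N,\si)$, as long as $t\leq S_{\max}$.} Write $\lambda\geq\mu\geq\nu$ for the eigenvalues of $\curlR$; in dimension three the lowest eigenvalue satisfies, in the barrier sense, $\partt\nu\geq\lap\nu+\nu^2+\lambda\mu$. I would apply the maximum principle to $f:=h\nu+\sqrt t$, exactly as $h\Sc+\sqrt t$ was used in the proof of Theorem~\ref{2dd}: at $t=0$ one has $f\geq-\tfrac1{400N}$ on $\{h>0\}$; at a first point and time where $f$ first equals a negative value $-c$ with $c<1$ one has $\nu<0$, the gradient term $-2g(\grad h,\grad\nu)$ is absorbed by Young's inequality together with property~(v) of the cut-off into $-\tfrac14 h\nu^2-C(\si)$ just as in two dimensions, and $\partt\sqrt t=\tfrac1{2\sqrt t}$ dominates $C(\si)$ for $t$ small. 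It then remains to see that $\nu^2+\lambda\mu\geq 0$ at such a point. If $\mu\geq 0$ then $\lambda\geq\mu\geq 0$, and if $\lambda\leq 0$ then $\mu\leq\lambda\leq 0$ as well; in both cases $\lambda\mu\geq 0$ and $\tfrac34 h\nu^2>0$ finishes the contradiction. The delicate case is $\lambda>0>\mu\geq\nu$: here $|\mu|\leq|\nu|\leq 2$ by~(a), so $\lambda\mu\geq -2\lambda$, and $\lambda$ is controlled only by $|\Riem|\leq\tfrac{N}{t}$, so that this term can be as bad as $-\tfrac{cN}{t}$, which is not integrable in $t$ near $0$. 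Ruling this case out is exactly what the hypothesis $|\Riem|\leq\tfrac{N}{t}$ and the large factor $400N$ in $\curlR(g(0))\geq-\tfrac1{400N}$ are there for: the idea is to establish first, by the same type of localized maximum principle (the scalar curvature being handled verbatim as in Theorem~\ref{2dd}, since $\partt\Sc=\lap\Sc+2|\Ric|^2\geq\lap\Sc+\tfrac23\Sc^2$ in dimension three), enough pinching on ${}^tB_{1-\si/2}(x_0)$ --- in the spirit of a localized Hamilton--Ivey estimate --- to conclude that wherever $\mu<0$ there one must have $\lambda$ comparably small, so that at the critical point $\lambda\mu$ cannot beat $\nu^2+\tfrac1{2\sqrt t}$. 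The hard part will be making this interplay between the $\tfrac{N}{t}$ bound, the reaction term $\lambda\mu$, the diffusion and the constant $400N$ quantitatively correct; I expect this to be the main obstacle of the proof.

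The rest of the argument follows the two-dimensional case. For the volume estimate: once $\curlR\geq -1$ holds on ${}^tB_{1-\si}(x_0)$, any minimising geodesic joining two points of ${}^tB_{\rho}(x_0)$ stays inside ${}^tB_{1/4}(x_0)$ (the triangle-inequality argument of Claim~(ii) in the proof of Theorem~\ref{2dd}), so Hamilton's distance distortion estimate (Theorem~17.2 of \cite{HaFo}) together with $|\Riem|\leq\tfrac{N}{t}$ bounds $\partt d_t(x,y)$ from both sides by a function integrable in $t$; arguing as in Corollary~6.2 of \cite{Si3} then shows $\vol({}^tB_{\rho}(x_0))$ stays above $\tfrac12\ti v_0$ up to a time $S(v_0,N,\si)$. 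These two facts force $S_{\max}\geq S(v_0,N,\si)$, which yields the theorem with $\de_0=\de_0(v_0,N,\si)$ and $\ti v_0=\ti v_0(v_0,N,\si)$. Finally, at points and times where $d_t(\cdot)$ is not differentiable I would use the barrier $\ti d_s(y):=d(x_0,y_0,s)+d(y_0,y,s)$ with $y_0$ on a minimising geodesic from $x_0$ to the point in question and close to $x_0$ (Calabi's trick, \cite{Ca}), exactly as in Claim~(iii); and where $\nu$ (or the quantity used for the pinching step) fails to be smooth one works with a smooth lower barrier for $\curlR$, the evolution inequalities above holding in the viscosity sense.
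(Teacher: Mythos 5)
Your overall scaffolding matches the paper's: Bishop--Gromov at $t=0$, a maximal interval on which a curvature lower bound and a volume lower bound persist, the cut-off $h$ from Proposition \ref{cutty} fed by the hypothesis $|\Riem|\leq N/t$ in place of Theorem \ref{localest}, the scalar curvature lower bound proved verbatim as in Theorem \ref{2dd}, the volume step via Hamilton's distance distortion, and Calabi's trick. But the central step --- preserving the lower bound on $\curlR$ --- is exactly the one you leave open (``I expect this to be the main obstacle of the proof''), and the route you sketch for closing it does not work. You propose to show that wherever $\mu<0$ the top eigenvalue $\lambda$ must be ``comparably small'' so that $\lambda\mu$ cannot beat $\nu^2+\tfrac{1}{2\sqrt t}$. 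No such statement is available here, and the pinching the paper actually proves points the opposite way: Claim (ii) establishes $\al+2\Sc\geq-2\ep_0$ with $\ep_0=\tfrac{1}{10N}$ (via the tensor $P=hL+\ep$, $L=\Riem+2\Sc\Id$, whose reaction term satisfies $\partt(\al+2\Sc)\geq\tfrac{1}{1000}(\al+2\Sc)^2$ in \emph{every} sign configuration of $\be,\ga$), and this forces $\Sc$, hence $\ga$, to be \emph{large and positive} when $\al$ is very negative.

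The mechanism that then preserves $\curlR\geq-1$ is not a maximum principle for $h\nu+\sqrt t$ but for the modified tensor $Y=h\curlR+(\tfrac{\ep}{100}+\ep t\Sc)\Id$ with $\ep(t)=\ep_0(\tfrac12+kt^{1/4}/\ep_0)$. At a first zero of $Y$ one writes $h\be\ga\geq h\al\ga=Y(V,V)\,\ga-(\tfrac{\ep}{100}+\ep t\Sc)\ga$, so the dangerous term of size $N/t$ is multiplied by $Y(V,V)=0$ and the remainder $-\ep(\tfrac{\ga}{100}-t\Sc\ga)$ is cancelled against the reaction of $\ep t\Sc$ and bounded below via $[\Sc-\tfrac{\ga}{100}]=[\tfrac{\al+\be}{100}+\tfrac{99}{100}\Sc]\geq\tfrac{2}{100}[\al+2\Sc]-\tfrac{95}{100}\ep_0$, i.e.\ precisely by Claims (i) and (ii). This modification introduces a cross term $\tfrac{2\ep s}{h}\,g(\grad\Sc,\grad h)$ which must be controlled by Shi's estimate $|\grad\Riem|^2\leq\hat c N^3/t^3$ (derived from $|\Riem|\leq N/t$ on the unit ball) together with the lower bound $h(y,s)\geq\tfrac{\ep_0 s}{500N}$ at the critical point --- and this last inequality is exactly where the hypothesis $\curlR(g(0))\geq-\tfrac{1}{400N}$ with its specific factor $400N$ enters. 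None of this machinery (the auxiliary estimate $\al+2\Sc\geq-2\ep_0$, the $\ep t\Sc\,\Id$ correction, Shi's derivative estimates, the lower bound on $h$ at the touching point) appears in your proposal, so the argument as written does not close.
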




\begin{proof}  
By the Bishop-Gromov volume comparison principle we have 
$\vol(   {{}^0 B}_{s}(x_0)) \geq c(N,v_0)s^n $ for all $s \leq 1$.
In particular 
$\vol( {{}^0 B}_{\frac 1 {1000} } (x_0)) \geq \ti v_0(N,v_0)>0 $.

For some maximal time interval $[0,S_{max})$, $S_{max} \leq T$ we have (due to smoothness) that 
 \begin{itemize}
 \item[(a)]  $\curlR \geq - 1 $ on ${{}^tB}_{(1-\si)}(x_0)$
\item[(b)] $\vol({{}^tB}_{\frac 1 {1000}}(x_0)) \geq \frac{\ti v_0}{2} $.
\end{itemize} 

Our aim is to obtain an estimate from below for the time $S$ which
only depends on $v_0$ and $N$ ($n = 3$ is fixed here).

For convenience we denote the constant  $\frac {1}{10N} $ by
$\ep_0:=\frac {1}{10 N} $.

{\bf Claim (i)  } The scalar curvature is bounded from below by $-2\ep_0$ on
${{}^tB}_{(1-\frac {\si}{10})}(x_0)$
for $t \leq S$ where $S = S(v_0,\si, N) >0$, as long as $t \leq
S_{max}$.

\noindent proof of Claim (i):
\noindent
This is proved using the same argument as that given in the proof of   Claim (i) in the proof of  Theorem
\ref{2dd} above.

This finishes the proof of the {\bf Claim (i)}.

For convenience we introduce $\al < \be < \ga$ to be the eigenvalues of $\curlR$ as in Hamilton.
Then $\Sc = \al + \be + \ga$ and
$|\Ricci|^2 = \frac 1 2( \al^2 + \be^2 + \ga^2 + \al \be + \al \ga + \be
\ga).$

{\bf Claim(ii)}
$\al +  2\Sc \geq - 2\ep_0 $ on
${{}^tB}_{(1-\frac{\si}{5})}(x_0)$ as long as $t \leq S_{max}$ and $t \leq  S(N,\si,v_0)$.

\noindent {\it Remark:} A local result of this type was first shown by
B.L-Chen under the extra assumption that the Ricci curvature remains
bounded on ${{}^tB}_1(x_0)$: see Proposition 2.2 in \cite{BLC}  ( for a
related result see Lemma 4.1 of \cite{Si1}). 

\noindent proof of Claim (ii):

\noindent Let $ L(V,V) = \Riem(V,V) + 2\Sc \Id(V,V)$, 
$P(V,V) = h  L(V,V)  + \ep_0(1 + \frac{k t }{\ep_0}  ) \ \ (  = h(\Riem(V,V) + 2\Sc
\Id(V,V))  + \ep )$ for 2-Forms, where we have
chosen now $\phi$ to satisfy $\phi(r) = 1$ for all $r \leq (1-\frac{\si}{6}) $ and
$\phi(r) = 0$ for all $r\geq (1-\frac{\si}{7}) $ in the definition of $h$ in
Proposition \ref{cutty}.
We shall only be concerned with points where $h \neq 0$, and so we
may use freely the results of Claim (i) for $t\leq S(N,v_0,\si)$. We do
so, sometimes without further comment. $k= k(N,\si,v_0)$ is a large constant which we shall choose later
in the proof.

Also we have introduced the notation $\ep =\ep(t) =  \ep_0(1 + \frac{k t
}{\ep_0}  )$.
For the time intervals we are considering, we have 
$\ep_0\leq \ep(t) \leq 2 \ep_0$, as we shall assume that $t \leq
\frac{\ep_0}{2k}$.

In all of the following arguments (also for the proofs of claims
(iii),(iv) and (v)) we shall be calculating the evolution of the
curvature in the setting of \cite{HaFour}. That is, we are using the trick of
K.Uhlenbeck.
In particular, the metric $G_{ab}(x) := g_{ij}(x,t)u^i_a(x,t) u^j_b(x,t)$
is the pullback of the metric $g(x,t)$, and it is time independent:
$\partt G_{ab}(x) = 0$.
$\Id$ here is the operator on two forms given by $\Id(V,W):=
G^{ab}G^{cd}V_{ac}W_{bd}.$
In particular $\partt (\Id(V,W)) = 0$ for a time independent vector field.
The connection,${{}^{t}\grad}$,  is the pullback connection of $
{{}^{g(t)}\grad} $. We still have ${{}^t\lap} f(x) = \lap_{g(t)}
f(x)$ for smooth functions $f:M\to \R$ (the left hand side is the laplacian with respect to the
pullback connection and the right hand side is the laplacian with respect to
$g(t)$).
We also have ${{}^t\grad} \Id = 0$. See \cite{HaFour}  for details.
Once again we consider only $t \leq
\frac{A^2}{100m_0^2} =\ti m_0(\si,N,v_0)$ so that Proposition \ref{cutty} is applicable.
Then $P(V,V) =h( \al + 2\Sc) + \ep$ for a 2-form $V$ with length one
which minimises $P$ at any point in space and time.
We first estimate the reaction term coming from the evolution equation
for $L$.
At the end of the proof we explain how to deal with the reaction
diffusion equation for $P$ (in particular the gradient terms).
In Lemma 4.1 of \cite{Si1} it is shown  (with $\ep  := 1$ there )
that the reaction equation for $L = \al + 2R$ is given by
\begin {eqnarray*}
\partt (\al + 2\Sc)  
&& = \al^2 + \be \ga + 2( \al^2 + \be^2 + \ga^2 + \al \be + \al \ga  +
\be \ga )\cr 
\end{eqnarray*}

In case $\be, \ga \geq 0$ ,  or $\be,\ga \leq 0$  (which implies $\be
\ga \geq 0$) we get 
\begin {eqnarray*}
\partt (\al + 2\Sc)  && \geq  \al^2 +  2( \al^2 + \be^2 + \ga^2 + \al \be + \al \ga 
)\cr 
&&= 3 \al^2 +  2\be^2 + 2\ga^2 + 2 \al \be + 2\al \ga  \cr
&& \geq ( \al^2 + \ga^2 + \be^2)\cr
&&\geq \frac 1 {1000}(\al + 2\Sc)^2
\end{eqnarray*}
in view of Young's inequality.
In case $\be \leq 0, \ga \geq 0$ ( which implies that $\al \be \geq
0$)
we get by applying Young's inequality a number of times
\begin {eqnarray*}
\partt (\al + 2\Sc)    && = \al^2 + \be \ga + 2( \al^2 + \be^2 + \ga^2 + \al \be + \al \ga  +
\be \ga )\cr 
&&\geq 
 3\al^2 + 2\be \ga +  2\al \ga +   2\be^2 + 2\ga^2 
)\cr 
&& \geq 2\al^2 + 2\be\ga + 2\be^2 + \ga^2 \cr
&&\geq \frac 1 3 ( \al^2 + \be^2 + \ga^2) \cr
&& \geq \frac 1 {1000} (\al + 2\Sc)^2 \cr
\end{eqnarray*}

At a first time and point $(y,s) $ where $h (\al + 2\Sc) = -\ep$, we must clearly
have that $\al <0$ ( otherwise $ -\ep = h(\al + 2\Sc) \geq 0 $ which is a
contradiction ).
Let $V$ be a 2-form with length one such that $P(V,V) = 0$.
We have 
\begin{eqnarray*}
\partt P(V,V) && \geq (\lap P)(y,s)(V,V) - 2(g^{ij}\grad_j h)(y,s) (\grad_i L)(y,s)(V,V)  \cr
&& \ \    + \frac 1 {1000}h(y,s)(L(y,s)(V,V))^2 + k 
\end{eqnarray*}
in view of the above reaction equation for $L$.

We estimate the gradient term in the above as follows
\begin{eqnarray}
 - 2g^{ij}\grad_j h(y,s) (\grad_i L)(y,s)(V,V)  && =
 - \frac{2}{ h(y,s)} (g^{ij} \grad_j h)(y,s)  (\grad_i (hL))(y,s)(V,V)
 \cr&& 
\ \  \ + 2L(y,s)(V,V) \frac{|\grad h|^2}{h}(y,s) \cr
&& =   2L(y,s)(V,V) (\frac{|\grad h|^2}{h})(y,s) \cr
&& \geq -\frac{L^2}{2000}(y,s) h(y,s) - \frac {4000}{h^3(y,s)}    |\grad
h|^4(y,s) \cr
&& \geq - \frac{L^2}{2000}(y,s) h(y,s) - 2\hat C  \label{doodly2}
\end{eqnarray}
where in the last line we have once again used that
$|\grad h|^4 \leq \hat C(\si) h^3$.
Hence we obtain 
\begin{eqnarray}
\partt P(V,V) && \geq (\lap P)(y,s)(V,V) - 2\hat C+ k \label{doodly}
\end{eqnarray}
at $(y,s)$, which leads to a contradiction if $ k$ is chosen
appropriately (here $n=3$).
Hence $P$ remains non-negative in the time interval considered.
In particular, using the definition of $h$ , we see that
$h(\al + 2R) +\ep_0(1 + \frac{kt}{\ep_0}) \geq 0$ implies
$$ \al + 2R + 2\ep_0 \geq 0 $$ on ${{}^tB}_{(1- \frac{\si}{5}) } (x_0)$
 for $t \leq S(N,v_0,\si)$ ( possibly a smaller $S$ now ) as required.

\noindent This finishes the proof of Claim (ii).

\noindent {\bf Claim (iii)} The volume condition (b) is not violated for a well
defined time interval $ t \leq S(N,v_0,\si)$, as long as $t \leq S_{max}$.

The proof may be taken from Claim (ii) in the  proof of Theorem
\ref{2dd} above, with two changes:  we use $\curlR\geq -1$ on
in place of $\Sc \geq -1$ and we use
the assumption that $ |\Riem(g(t))| \leq \frac N  t$ on ${{}^tB}_{1} (x_0)$

This finishes the proof of  {\bf Claim (iii)}.

{\bf Claim (iv)} The curvature condition (a) will also not be violated
for a well defined time interval $[0,S(N,v_0,\si))$ as long as $t\leq S_{max}$.
The proof of this claim is initially similar to that of Claim (i) and Claim (ii).
In order to estimate the gradient term we require some different arguments.

Define $\ep(t):=  \ep_0(\frac 1 2 +\frac{kt^{\frac 1 4}}{\ep_0})$.
Let $Y := h \curlR +  (\frac{\ep}{ 100}  +
\ep t \Sc )\Id$  ( note here: $\ep t = \ep(t) t$ ), where $h$ is a cut-off function
coming from Proposition \ref{cutty} with $\phi(r) = 1$ for all $r \leq 1- \frac
\si 2$ and
$\phi(r) = 0$ for all $r \geq  1- \frac
\si 3$. This is a local version of the tensor appearing in Lemma 5.2 of
\cite{Si1}.
$k = k(N,\si,v_0)$ is a large positive constant which shall be chosen later in the proof. 
We wish to show that $Y$ remains larger than zero for $t \leq
S(N,\si,v_0)$ in $  {{}^tB}_{(1-\frac {\si}{4})}(x_0)$
as longs as $t\leq S_{max}$. Assuming we have a first
time and point $(y,s)$, $ y \in   {{}^tB}_{(1-\frac {\si}{5})}(x_0)$
and a two form $V$ of length one where
$Y(y,s)(V,V) = 0$, then we must have $h(y,s) >0$, otherwise
$Y= h \al + (\ep t \Sc + \frac{\ep}{100})  = (\ep t \Sc + \frac{\ep}{100})
\geq -4\ep_0 \ep s + \frac{\ep}{100} >0$ for $s \leq S(N,v_0,\si)$ small in view of Claim (i), which is a contradiction.
Henceforth, we shall only be concerned with points where $h > 0$ and so we may
freely use the results of both Claims (i) and (ii) for $t\leq
S(N,v_0,\si)$  in
view of the definition of $\phi$ we have chosen here. We do so, sometimes
without further comment.
We assume that $t^{\frac 1 4} \leq \frac{\ep_0 }{100k}$ so that
$\frac 1 2 \ep_0 \leq \ep \leq 2 \ep_0$.
First we examine the reaction term occurring in the evolution of the
tensor $h \curlR +  (\frac{\ep}{ 100}  +
\ep t \Sc )\Id$. 
Afterwards we explain how to deal with the reaction diffusion equation occurring here, in particular
how to estimate the gradient terms and the zero order term which
appears at the end of this estimate.
For convenience we introduce $\al < \be < \ga$ to be the eigenvalues of $\curlR$ as in \cite{HaFo}.
Then $\Sc = \al + \be + \ga$ and
$|\Ricci|^2 = \frac 1 2( \al^2 + \be^2 + \ga^2 + \al \be + \al \ga + \be \ga).$
It is shown in \cite{HaFo} that the reaction equation for $\al$ is
given by
$\partt \al = \al^2 + \be\ga$.
We have an evolution inequality for $h$ given by $\partt h \leq  \lap h$ for
$t \leq k$ and hence the reaction equation for $h$ may be estimated by 
$\partt h \leq  0$ for $t \leq S(N,v_0,\si)$.  This means that the reaction equation for $h
\al$ at a point in space and time where $\al <0$ and  $\ga >0$  may be
estimated by 

\begin{eqnarray*}
\partt \al h && \geq  h\al^2 + h\be\ga \cr
&& \geq h\al^2 + h\al\ga \cr
&& = h\al^2 + (h\al + \frac{\ep}{ 100} +\ep t \Sc )\ga
- (\frac{\ep}{ 100} +\ep t \Sc )\ga.
\end{eqnarray*}
 If $\ga \leq 0$ then $0 \geq \ga, \be, \al \geq -2\ep_0$ in view of Claim
 (i) and hence we have 

\begin{eqnarray*}
\partt \al h && \geq  h\al^2 + h\be\ga \cr
&& \geq h\al^2\cr 
&& \geq  h\al^2 + h \al \ga -10 \cr
&& =  h\al^2 + (h\al + \frac{\ep}{ 100} +\ep t \Sc )\ga
- (\frac{\ep}{ 100} +\ep t \Sc )\ga - 10.
\end{eqnarray*}

The reaction
equation for  $(\frac{\ep}{ 100}  +
\ep t \Sc )\Id$ is  
\begin{eqnarray*}
\partt (\frac{\ep}{ 100}  + \ep t \Sc )  &  = &  \frac{k}{400 t^{\frac 3 4}}  + \ep \Sc+
2\ep t |\Ricci|^2 + \frac{kt R}{4t^{\frac 3 4}} 
 \cr
&  \geq &  
\frac{k}{500 t^{\frac 3 4}}  + \ep \Sc + \ep t ( \al^2 + \be^2 + \ga^2 + \al \be + \al \ga +
\be \ga),
\end{eqnarray*}
for $t \leq S(N,v_0,\si)$, in view of Claim (i). Combining these three inequalities we get
\begin{eqnarray}
\partt ( \al h + ( \frac{\ep}{ 100}  + \ep t \Sc )) \geq &&
 h\al^2 + (h\al + \frac{\ep}{ 100} +\ep t \Sc )\ga 
+ \ep [ \Sc -  \ga \frac{1}{ 100} ] \cr
&&  + \ep t [ - \Sc \ga +  \al^2 + \be^2 + \ga^2 + \al \be + \al \ga +
\be \ga] + \frac{k}{800 t^{\frac 3 4}} \cr
= &&  h\al^2 + (h\al + \frac{\ep}{ 100} +\ep t \Sc )\ga \cr
&& + \ep [ \Sc -  \ga \frac{1}{ 100} ]
 + \ep t [   \al^2 + \be^2  + \al \be ] + \frac{k}{800 t^{\frac 3 4}}
 \cr
\geq  && h\al^2 + (h\al + \frac{\ep}{ 100} +\ep t \Sc )\ga 
+ \ep [ \Sc -  \ga \frac{1}{ 100} ] + \frac{k}{800 t^{\frac 3 4}} \label{bububu} 
\end{eqnarray}

Assuming we have a first time and point $(y,s)$ where 
$Y(y,s) = 0$, $ y \in  {{}^tB}_{(1-\frac {\si}{5})}(x_0)$ then we
must have $h(y,s) >0$ as we explained at the
beginning of the proof of this Claim.
At $(y,s)$ we have $\al(y,s)  = - \frac 1 h (  s\ep(s) \Sc(y,s) +
\frac{\ep(s)}{ 100}) <  \frac 1 h (4s\ep(s) \ep_0)  - \frac 1 h \frac{\ep(s)}{100})  <0$ ( in view of Claim (i)) and hence the reaction equation of 
 $Y = h \al + \ep t \Sc + \frac{\ep}{100}$ may be estimated by
\begin{eqnarray*}
\partt ( \al h + ( \frac{\ep}{ 100}  + \ep t \Sc )) && \geq  h\al^2 + (h\al + \frac{\ep}{ 100} +\ep t \Sc )\ga
+ \ep [ \Sc -  \ga \frac{1}{ 100} ] + \frac{k}{800 t^{\frac 3 4}} \cr
&&\geq  h\al^2 
+ \ep [ \Sc -  \ga \frac{1}{ 100} ] + \frac{k}{800 t^{\frac 3 4}} ,
\end{eqnarray*}

in view of the estimate \eqref{bububu},
where we have used that
$Y(y,s) = 0$.
 $ [ - \frac 1 {100}\ga   +  \Sc] =      [\frac {(\al + \be)}{100} + \frac{99}{100}
\Sc ] \geq [\frac 2 {100} \al + \frac 4 {100} \Sc + \frac {95}{100} \Sc ] \geq \frac 2 {100}
[\al + 2\Sc] -\frac{95}{100}\ep_0 \geq -10\ep_0$ 
in view of Claim (i) and (ii).
Hence  
\begin{eqnarray*}
 \partt Y \geq &&  h(y,s) \frac{ \al^2 }{2}+ \frac{k}{801 t^{\frac 3 4}}  \cr
\end{eqnarray*}

Now we examine the reaction diffusion equation. Using the estimate on the reaction equation above, we see that
at time $(y,s)$ in direction $V$ where $Y(y,s)(V,V) = 0$ 
\begin{eqnarray}
\partt Y(V,V) && \geq (\lap Y)(y,s)(V,V) - 2(g^{ij} \grad_j h)(y,s)
(\grad_i  \curlR) (y,s)(V,V)  \cr
&& \ \    + h(y,s)\frac 1 2\al^2(y,s) + \frac{k}{801 t^{\frac 3 4}} \label{doodly3}
\end{eqnarray}

We estimate the second term  (the gradient term) of the right hand side of this inequality:
\begin{eqnarray}\label{bubu1}
&&  - 2(g^{ij} \grad_j h)(y,s)
(\grad_i  \curlR) (y,s)(V,V) \cr
&& =  - \frac 2 {h(y,s)} g^{ij}\grad_j h  \grad_i (\curlR  h) (y,s)(V,V)
+ 2\al(y,s)\frac{|\grad h|^2}{h} \cr
&& =   - \frac 2 {h(y,s)} g^{ij}  \grad_j h (y,s)  \grad_i [\curlR  h +   \frac{\ep}{ 100} \Id +
\ep t \Sc \Id ](y,s)(V,V)\cr
&& \ \ \ + 2\al(y,s) \frac{|\grad h|^2}{h} 
+  \frac {2 \ep s} {h(y,s)} g^{ij}(\grad_i \Sc \grad_j h)(y,s)(V,V)
\cr
&& =  2\al(y,s) \frac{|\grad h|^2}{h} 
+  \frac {2 \ep s} {h(y,s)} g^{ij}(\grad_i \Sc \grad_j h)(y,s)(V,V) \cr
&& \geq  - \frac 1 4 \al^2(y,s)h(y,s)  -   2\frac{|\grad h|^4}{h^3} (y,s)
-  \frac{2\ep s}{h(y,s)} |\grad \Riem |(y,s) |\grad h|(y,s) \cr
&& \geq  -  \frac 1 4 \al^2(y,s)h(y,s)  - C(\si) -   \frac{2\ep s}{h(y,s)} |\grad \Riem |(y,s) |\grad h|(y,s),
\end{eqnarray}

since $|\grad \phi|^2 \leq \phi^{\frac 3 2}C(\si)$ .
Using the estimates of Shi, see \cite{HaFo} Theorem 3.1, and the fact that
$|\Riem |(x,t) \leq \frac N t$,  we see that
$|\grad \Riem|^2 \leq \frac{ \hat c N^3}{t^3}$  where $\hat c = \hat
c(N,\si)$ . We explain this in more detail. $x \in {{}^tB}_{(1- \frac {\si} {5} ) }(x_0)$ implies ${{}^tB}_{ \frac {\si}
  {1000}}(x)   \subseteq  {{}^tB}_1(x_0)$ for the $x$ we are
considering.  We work in the ball ${{}^tB}_{ \frac {\si}
  {1000}}(x) $, and we have $|\Riem(\cdot,t)| \leq \frac N t$ there.
Scale  so that $t =1$.
 Note that $|\Riem|\leq 2N$ for $t \in
(\frac 1 2,1]$ after scaling, so distances change in a controlled
manner near time $1$. 
This allows one to find a parabolic region of
the form ${}^{g(1)}B_r(x) \times [-r^2,1]$ for some $r = r(\si,N) >0$
close to one on which $|\Riem|\leq 2 N $.
Now we may use the estimates of Shi, see \cite{HaFo} Theorem 3.1,  at $t=1$, and
then scale back to $t$ ( this completes the explanation of the
estimate $|\grad \Riem|^2 \leq \frac{ \hat c N^3}{t^3}$  ).
Using $|\grad \Riem|^2 \leq \frac{ \hat c N^3}{t^3}$, we get
\begin{eqnarray*}
\frac{2\ep s }{h(y,s)} |\grad \Riem |(y,s) |\grad h|(y,s)
\leq \frac{4 \hat c N^{\frac 3 2}\ep_0 }{h(y,s) s^{ \frac 1 2} } |\grad h|(y,s).
\end{eqnarray*}
Notice that we must have $ h(y,s)\geq \frac{ \ep_0 s}{N 500 }$. If not,
then
\begin{eqnarray*}
 h \al + (\ep s \Sc + \frac{\ep}{100})\Id && \geq -h|\Riem |  - \frac{\ep_0}{400} +
 \frac{\ep_0}{200} \cr
 && \geq -h|\Riem |  + \frac{\ep_0}{400}
 \cr
&& \geq -\frac{N \ep_0}{N  500  } +\frac{\ep_0}{400} \cr
&& >0,
\end{eqnarray*}
( at $(y,s)$ ) if $s \leq S(N,v_0,\si)$, which is a contradiction ( here we have used
again that $ |\Riem(g(t))| \leq \frac{N}{t}$
and $\ep(s)s\Sc \geq - \frac{\ep_0}{400}$ for $s\leq S(N,\si,v_0)$ small enough in
view of Claim (i) ).

Hence 
\begin{eqnarray*}
 \frac{2\ep s}{h(y,s)} |\grad \Riem |(y,s) |\grad h|(y,s)
&& \leq  \frac{4 \hat c N^{\frac 3 2}\ep_0 }{h(y,s) s^{ \frac 1 2} }
|\grad h|(y,s) \cr
&& \leq  \frac{4 c \hat c N^{\frac 3 2}\ep_0 h^{ \frac 3 4} }{h(y,s) s^{ \frac 1 2} } \cr
&& =  \frac{4 c \hat c N^{\frac 3 2}\ep_0 }{h^{\frac 1 4}(y,s) s^{ \frac 1 2} } \cr
&& \leq \frac{ 4 (500 N )^{\frac 1 4}  c \hat c N^{\frac 3 2} \ep_0 }
{s^{\frac 3 4} \ep^{\frac 1 4}_0 } \cr
&& \leq \frac{ \hat c \ep^{\frac 3 4}_0 c(500 N)^{2}}{s^{\frac 3 4}},
\end{eqnarray*}
where we have once again used that
$|\grad h|^4 \leq C h^3$.

Substituting this inequality into \eqref{bubu1}, we obtain
\begin{eqnarray}\label{bubu2}
-2g^{ij} \grad_j h  \grad_i \curlR(y,s)(V,V) 
&& \geq  -  \frac 1 4 \al^2(y,s)h(y,s)   - C -  \frac{ \hat c
  \ep^{\frac 3 4}_0 c(500 N)^{2}}{s^{\frac 3 4}}.
\end{eqnarray}

Substituting the inequality \eqref{bubu2} into \eqref{doodly3} we see
that
\begin{eqnarray*}
\partt Y(y,s)(V,V)  > (\lap Y)(y,s)(V,V)  
\end{eqnarray*}
if $k= k(N,\si,v_0)$ is chosen large enough. This contradicts
$(y,s)$ being a first time and point where $Y$
is zero.
This implies that  $Y$ remains larger than zero for a well defined
time interval $[0,S(N,v_0,\si))$ as long as $t \leq S_{max}$.

Using the definition of $\phi$ (which is used in $h$) we see that
$$\al \geq - \frac {2\ep}{100} - 2\ep t \Sc \geq -4\ep_0 N >-1$$ on ${{}^tB}_{(1-\si)}(x_0)$
for $ t \leq S(N,v_0,\si)$ as long as $t \leq S_{max}$.
Here we have used $|\Sc(\cdot,t)|t \leq N$.   

That is, condition (a) will also not be violated for a well defined
time interval. That is $S_{max} \geq S(n,N,v_0)>0$.
This finishes the proof of {\bf Claim (iv)} and the proof of the theorem if one accepts Claim (v)
(note that $(\ti b)$ also holds).

\noindent{\bf Claim (v)} If $d_t(\cdot)$ is not
differentiable at $x \in M$ then we use the trick of E.Calabi
(\cite{Ca}) as explained in the proof of Claim (iii) of  the proof of
Theorem \ref{2dd}.

Hence we may argue with
$ \ti h(y,s):= e^{-2k_0s} \ti k(y,s)$ 
everywhere above.
If for example $(x,t)$  is a first time and point where
$(h (\curlR + 2\Sc \Id)  + \ep \Id)(V,V) = 0$ ,  then $(x,t)$  is a first time and point where
$(\ti h (\curlR + 2\Sc \Id)  + \ep \Id)(V,V) = 0$ , at least locally ( see the proof of Claim (iii) of the proof of
Theorem \ref{2dd} for further details ).
Arguing as in the proof of Claim (iii) of the proof of
Theorem \ref{2dd} , we see that such an $(x,t)$ cannot exist.

A similar argument holds for the tensor in Claim (iv) of this proof.

This finishes the proof of Claim (v) of this proof and the proof of the theorem.
\end{proof}

\begin{remark}\label{endremark}
Theorem \ref{thred} establishes  Theorem \ref{threed} for the case $V
= \frac{1}{400N}$, 
$N \geq 1$. The case of general $V>0$ may be obtained as follows.
Scale so that $r=1$.
Now scale again, so that $\curlR \geq -\frac{1}{400N}$ on $ {{}^0
  B}_r(x_0)$ where $r = \sqrt{V  400 N} $ ( notice that we require $V>0$ to
do this ).
We now have $\vol ({ {}^0B_r}(x_0)) \geq \hat v_0 = \hat v_0(V,N,v_0)
>0$.
Now repeat the proof of Theorem \ref{thred} with the following changes: replace $v_0$ by $\hat v_0 $, replace all
balls $^t B_s(x_0)$ that appear in the proof by $^t B_{rs}(x_0)$,
choose a cut-off function $\phi$ from Proposition \ref{cutty} with
$\phi(s) = 1$ for all $s \leq r(1- \frac{\si}{6})$
and $\phi(s) = 0$ for all $s \geq r(1- \frac{\si}{7})$ in Claim (ii)
respectively 
with $\phi(s) = 1$ for all $s \leq r(1- \frac{\si}{2})$
and $\phi(s) = 0$ for all $s \geq r(1- \frac{\si}{3})$ in Claim (iv).
The proof then works without any further changes, except that the
constants that occur now also depend on $V$ ( this dependence also
appears in the
statement of the Theorem ).
\end{remark}

\bibliographystyle{amsplain}
 \bibliography{hyp}
\def\cprime{$'$}
\providecommand{\bysame}{\leavevmode\hbox to3em{\hrulefill}\thinspace}
\providecommand{\MR}{\relax\ifhmode\unskip\space\fi MR }
\providecommand{\MRhref}[2]{%
  \href{http://www.ams.org/mathscinet-getitem?mr=#1}{#2}
}
\providecommand{\href}[2]{#2}

\end{document}